\newlength{\axiswidth} 
\newlength{\axisheight} 
\newtheorem{defn}{Definition}
\newtheorem{thm}{Theorem}
\newtheorem{assum}{Assumption}
\newtheorem{lem}{Lemma}
\newtheorem{prop}{Proposition}
\newtheorem{rem}{Remark}
\newtheorem{cor}{Corollary}
\newtheorem{prob}{Problem}
\crefname{assum}{Assumption}{Assumptions}
\crefname{defn}{Definition}{Definitions}
\newcommand{\R}{\mathbb{R}}
\newcommand{\N}{\mathbb{N}}
\newcommand{\agents}{\mathcal{A}}
\newcommand{\tasks}{\mathcal{T}}
\newcommand{\BAop}{\mathcal{B}}
\newcommand{\Bop}{\operatorname{B}}
\newcommand{\LexAop}{\mathcal{S}}
\newcommand{\graph}{\mathcal{G}}
\newcommand{\vertex}{\mathcal{V}}
\newcommand{\edges}{\mathcal{E}}
\newcommand{\weights}{\mathcal{W}}
\newcommand{\subedges}{\bar{\edges}}
\newcommand{\subgraph}{\bar{\graph}}
\newcommand{\subagents}{\bar{\agents}}
\newcommand{\subtasks}{\bar{\tasks}}
\newcommand{\vass}{\mathcal{P}}
\newcommand{\bin}{\mathbb{B}}
\newcommand{\bott}{\operatorname{b}}
\newcommand{\ebottmm}{\operatorname{e}}
\newcommand{\allebott}{\operatorname{E}}
\newcommand{\robOp}{\operatorname{r}}
\newcommand{\dt}{b}
\newcommand{\ds}{a}
\newcommand{\dist}{\operatorname{d}}
\newcommand{\locCon}{\mathcal{L}}
\newcommand{\bigO}{\mathcal{O}}
\newcommand{\vref}{v^{\textrm{ref}}}
\definecolor{a1}{rgb}{0.8929,0.5951,0.4381}
\definecolor{a2}{rgb}{0.3320,0.5298,0.7359}
\definecolor{a3}{rgb}{0.8212,0.4188,0.5180}
\definecolor{a4}{rgb}{0.0417,0.3354,0.5789}
\definecolor{a5}{rgb}{0.1077,0.6225,0.6454}
\acrodef{lap}[LSAP]{Linear Sum Assignment Problem}
\acrodef{bap}[BAP]{Bottleneck Assignment Problem}
\acrodef{lex}[LexBAP]{Lexicographic Bottleneck Assignment Problem}
\title{\LARGE \bf
Collision Avoidance Based on Robust Lexicographic Task Assignment
}
\author{Tony A.\ Wood, Mitchell Khoo, Elad Michael, Chris Manzie, and Iman Shames
\thanks{This work was supported by Defence Science and Technology Group through research agreements MyIP:7558, MyIP:7562, and MyIP:9156.}
\thanks{The authors are with the Control and Signal Processing Group, Department of Electrical and
Electronic Engineering, University of Melbourne, Parkville, VIC 3010, Australia. Email: {\tt\small  \{wood.t, manziec, iman.shames\}@unimelb.edu.au} and~{\tt\small  \{khoom1, eladm\}@student.unimelb.edu.au}.}%
}
\begin{document}

\maketitle
\thispagestyle{empty}
\pagestyle{empty}

\begin{abstract}
Traditional task assignment approaches for multi-agent motion control do not take the possibility of collisions into account. This can lead to challenging requirements for path planning. We derive an assignment method that not only minimises the largest distance between an agent and its assigned destination but also provides local constraints for guaranteed collision avoidance. To this end, we introduce a sequential bottleneck optimisation problem and define a notion of robustness of an optimising assignment to changes of individual assignment costs. Conditioned on a sufficient level of robustness in relation to the size of the agents, we construct time-varying position bounds for every individual agent. These local constraints are a direct byproduct of the assignment procedure and only depend on the initial agent positions, the destinations that are to be visited, and a timing parameter. We prove that no agent that is assigned to move to one of the target locations collides with any other agent if all agents satisfy their local position constraints. We demonstrate the method in a illustrative case study. 
\end{abstract}


\section{Introduction}
For  
autonomous systems with multiple agents and multiple tasks the first decision to be made is how to allocate the tasks to the agents. This type of decision is referred to as an assignment problem and may involve any of a variety of objectives. In \cite{Chopra2017ToR,Choi2009ToR} the sum of individual costs incurred for assigning robotic agents to tasks is minimised in a so called \ac{lap}. 
An overview of assignment problems is given in~\cite{Pentico2007EJoOR,Gerkey2004IJoRR,Burkard2012Book}. 

We focus here on a specific type of assignment problem where the largest incurred individual agent-to-task assignment cost 
is minimised. This objective is referred to as the \ac{bap}. It commonly applies to multi-agent problems where tasks are completed in parallel and the overall completion time is of interest, as in 
\cite{Shames2017CDC} for instance. A \ac{lex} describes a subclass of the \ac{bap} where not only the largest assignment cost but also all other assignment costs are minimised with a sequence of decreasing hierarchy~\cite{Burkard1991ORL}. A review of the state-of-the-art methods to solve the \ac{bap} is provided in~\cite{
Burkard2012Book}. 
In~\cite{Khoo2019CDC} an algorithm to solve the \ac{bap} with distributed computation is introduced. Global sensitivity of the bottleneck optimising assignment with respect to changes in the assignment costs is investigated in~\cite{Michael2019ECC}.  

In 
applications where the tasks involve agents moving towards desired locations, collisions between agents may occur. Motion control with inter-agent collision avoidance is a heavily researched problem and many different strategies exist, e.g., reciprocal collision avoidance~\cite{vandenBerg2011RR,
AlonsoMora2018ToR}, barrier certificates~\cite{Wang2017ToR}, buffered Voronoi cells~\cite{Zhou2017RAL}, and  multi-agent rapidly-exploring random trees~\cite{Desaraju2012AR}.  
In \cite{Turpin2014AR} independent trajectories for all possible agent-destination pairs are designed, tasks are 
allocated by selecting an assignment that minimises the lexicographic cost of the chosen trajectories, and collision are avoided by subsequently delaying the starting times of certain agents. 
Collision avoidance awareness criteria are included in a task assignment problem in 
\cite{Wu2019ISMRMAS} by designing a quadratic assignment cost function that incentivises collision-free paths. 

Similar to \cite{Shames2011IJoRaNC,MacAlpine2015CoAI,Turpin2014IJRR}, we investigate intrinsic properties of assignment problems that provide conditions for which inter-agent collisions are avoided.  In \cite{Shames2011IJoRaNC} it was first shown that straight lines connecting agents to allocated destinations do not intersect if the allocation corresponds to the solution of an \ac{lap}, with Euclidean distances between initial agent positions and target destinations as assignment costs. Conditioned on sufficiently large distances between initial positions and target locations, a time parametrisation of straight-line trajectories is derived in~\cite{Turpin2014IJRR} such that agents with finite extent do not collide when following an assignment that solves an \ac{lap}, with squared distances as assignment costs. We consider a case where the largest distance between any agent and its assigned destination is to be minimised. The non-intersection properties of the \ac{lap} 
do not hold for the \ac{bap}. 
However, assuming agents are point-masses that travel with constant velocity on straight paths towards their allocated destinations, it is shown in \cite{MacAlpine2015CoAI} that no collisions occur if the task allocation is based on either a \ac{lex}, with distances as assignment costs, 
or a designated assignment problem which combines a \ac{bap}, with distances as costs, and an \ac{lap}, with squared distances as costs.  
 We assume that agents have finite extent, consider arbitrary distance metrics, and do not limit the trajectories to straight lines.

The main contribution of this paper is the derivation of local position constraints for every agent. 
Rather than determining trajectories for all agents, the method proposed here provides time-varying sets of positions that guarantee collision avoidance but leave some degree of freedom for low-level path planning and motion control.  In particular, we propose a sequential bottleneck task assignment approach  
and show that it produces the unique optimising solution of a \ac{lex} if 
robustness conditions are satisfied. We construct the local constraints that prevent collisions by quantifying the robustness to changes of the distances to the destinations. 
The constraints for an individual agent do not explicitly depend on the positions of the other agents. Information on other agents only gets accounted for via the assignment costs. 
We prove that it is sufficient for every agent to satisfy its 
local constraints in order to guarantee that no agent that is assigned to one of the tasks will collide with any other agent. %



The paper is structured as follows. We 
formulate the assignment and collision avoidance objectives in Section~\ref{sec:ProblemFormulation}. In Section~\ref{sec:LexAss} we introduce the 
assignment procedure and investigate its properties. These properties are applied to derive sufficient conditions for collision avoidance in Section~\ref{sec:CollisionAvoidance}. We illustrate the resulting agent position constraints in a case study in Section~\ref{sec:CaseStudy} before concluding in Section~\ref{sec:conclusion}.  

\section{Problem Formulation}\label{sec:ProblemFormulation}
We consider $m$ agents, $\agents:=\{\alpha_l\}^m_{l=1}$, with initial positions, $p_i(0)\in\R^{n_p}$, $n_p\in\N$, $i\in\agents$, and $n$ target destinations, $g_j\in\R^{n_p}$, $j\in \tasks$, where $\tasks:=\{\tau_l\}^n_{l=1}$ and $\tau_l$ represents the task of going to destination $g_l$. Without loss of generality we assume there are at least as many agents as tasks, $m\geq n$, and consider only non-trivial cases where there are multiple agents, $m > 1$, and at least one task, $n\geq 1$. 

We define a set of binary variables, $\Pi:=\{\pi_{i,j}\in\{0,1\}\,|\,(i,j)\in\agents\times\tasks\}$, that indicates agent $i\in\agents$ being \emph{assigned} to task $j\in\tasks$ if and only if $\pi_{i,j}=1$.  We call such a set an \emph{assignment} and denote the set of all assignments by $\bin_{\agents,\tasks}$. We assume that a cost is incurred when an agent proceeds to visit an assigned location.
\begin{assum} \label{ass:dist} 
The cost of assigning an agent, $i\in\agents$, to complete a task, $j\in\tasks$, is given by the distance of the initial agent position to the corresponding destination, $\dist(p_i(0),g_j)$,
where $\dist:\R^{n_p}\times\R^{n_p}\rightarrow [0,\infty)$ is an arbitrary distance function that 
satisfies the triangle inequality,
\begin{align}\label{eq:triangleIneq}
  \dist(p,p')\leq \dist(p,p'') + \dist(p'',p')\,,
\end{align} for all $p,p',p''\in\R^{n_p}$.
\end{assum}

At any given time, $t$, the centroid location of agent $i\in\agents$ is given by a point, $p_i(t)$. However, the body of the agent occupies a finite volume in $\R^{n_p}$. If two agents lie in close proximity, they may collide. 
\begin{defn}[Inter-agent safety distance]
\label{defn:safetydistance}
  Agents $i,i'\in\agents$, $i\neq i'$, do not collide with each other at time $t$ if
  \begin{align}\label{eq:CollisionAvoidanceConstraint}
    \dist(p_i(t),p_{i'}(t))> s_{i,i'}\,,
  \end{align} where $s_{i,i'}=s_{i',i}\geq 0$ is known and named the \emph{safety distance} between $i$ and $i'$.
\end{defn}
We note that the collision avoidance condition in~\eqref{eq:CollisionAvoidanceConstraint} couples the motion control problem of agents $i$ and $i'$. Satisfying this condition introduces a non-convex constraint with respect to the positions of the agents, $p_i(t)$ and $p_{i'}(t)$, and is therefore challenging. 
 The objective considered in this paper is to derive local position constraints for each individual agent, that sufficiently guarantee avoidance of collisions of agents that are assigned to tasks such that the maximum agent-to-destination distance, the so-called \emph{bottleneck}, is minimised. 
\begin{prob}\label{prob:main} 
  Suppose that one agent is assigned to every target destination such that the largest individual cost is minimised according to the \ac{bap}, 
\begin{subequations}\label{eq:BAP}
  \begin{align}
    \min_{\Pi\in\bin_{\agents,\tasks}} &&&\max_{(i,j)\in\agents\times\tasks}\pi_{i,j}\dist(p_i(0),g_j)\label{eq:bapobjective}\\
    \textrm{s.t.}&&&\sum_{i\in\agents}\pi_{i,j} = 1 &\forall j&\in\tasks\,,\\
    &&&\sum_{j\in\tasks}\pi_{i,j} \leq 1 &\forall i&\in\agents\,.   
  \end{align}
\end{subequations}
Find time-varying sets of safe positions, $\locCon_i(t)\subset\R^{n_p}$, for all agents, $i\in\agents$, such that no agent assigned to a task collides with any other agent over a time interval, $t\in[0,T]$, i.e., 
if $p_i(t)\in\locCon_i(t)$ for all $i\in\agents$, $t\in[0,T]$, then $\dist(p_{i^*}(t),p_{i'}(t))> s_{i^*,i'}$,
for all $i^*\in\{i\in\agents\,|\,\sum_{j\in\tasks}\pi_{i,j}=1\}$, $i'\in\agents$, 
$t\in[0,T]$. 
\end{prob}

\section{Task Assignment}\label{sec:LexAss}
In this section we introduce task assignment criteria and derive properties that will be applied to obtain collision avoidance conditions in Section~\ref{sec:CollisionAvoidance}. We illustrate details of the assignment procedure 
with an 
example 
in Section~\ref{sec:exass}.

Given a set of agents, $\agents$, and a set of tasks, $\tasks$, where $|\agents|=m>1$ and $m\geq|\tasks|=n\geq 1$, 
we define the complete bipartite \emph{assignment graph} $\graph:=(\agents,\tasks,\edges)$, with vertex set, $\vertex:=\agents\cup \tasks$, and edge set, $\edges:=\agents\times \tasks$. 
We also define the set of \emph{assignment weights}, 
$\weights:=\{w_{i,j}\geq 0\,|\, (i,j)\in\edges\}$. An assignment, $\Pi\in\bin_{\agents,\tasks}$, is an \emph{admissible} allocation of a subset of tasks, $\subtasks\subseteq\tasks$, to a subset of agents, $\subagents\subseteq\agents$, with respect to edge subset, $\hat{\edges}\subseteq\subedges:=\subagents\times\subtasks$, if all considered tasks in $\subtasks$, are assigned to one agent in $\subagents$ and all these agents are assigned to at most one task. The set of admissible assignments for subgraph $\hat{\graph}:=(\subagents,\subtasks,\hat{\edges})$ is given by, 
\begin{align*}
  \vass_{\subagents,\subtasks}(\hat{\edges}):= \Big\{\Pi\!\in\! \bin_{\agents,\tasks}\,\Big|\,&\forall j\!\in\! \subtasks\, \sum_{i\in \{i'\in\subagents|(i',j)\in\hat{\edges}\}}\pi_{i,j} = 1,\\
&\forall i\!\in\! \subagents\,\sum_{j\in \{j'\in\subtasks|(i,j')\in\hat{\edges}\}}\pi_{i,j} \leq 1\Big\}.
\end{align*}

\subsection{Bottleneck Assignment}\label{sec:bottass}
The \ac{bap} formulated in~\eqref{eq:BAP} can be solved efficiently in polynomial time, see~\cite{Burkard2012Book}. Independent of how the \ac{bap} is solved, we define operators associated to it in the following. 
\begin{defn}[Bottleneck assignment]\label{def:bottass}
  We consider a subgraph of the assignment graph, $\hat{\graph
  }=(\subagents,\subtasks,\hat{\edges})$, the assignment weights, 
$\weights$, and an assignment, 
$\Pi\in\bin_{\agents,\tasks}$. 
We define a function that returns the largest value among the weights corresponding to assigned edges in $\hat{\edges}$, 
\begin{align}\label{eq:maxweight}
  \bott(\Pi,\hat{\edges},\weights) &:= \max_{(i,j)\in\hat{\edges}} \:\pi_{i,j}w_{i,j}\,, 
\end{align} 
a function that returns the \emph{bottleneck weight} of subgraph~$\hat{\graph}$,
\begin{align}\label{eq:bottleneckweight}
  \Bop_{\subagents,\subtasks}(\hat{\edges},\weights)&:= \min_{\Pi\in \vass_{\subagents,\subtasks}(\hat{\edges})} \bott(\Pi,\hat{\edges},\weights)\,,
\end{align} a map that returns the 
\emph{bottleneck 
minimising assignments}, 
\begin{align*}
  \BAop_{\subagents,\subtasks}(\hat{\edges},\weights)&:= \arg\min_{\Pi\in \vass_{\subagents,\subtasks}(\hat{\edges})} \bott(\Pi,\hat{\edges},\weights)\,,
\end{align*} 
and the set of edges with weight equal to the bottleneck 
\begin{align}\label{eq:allbottedges}
    \allebott_{\subagents,\subtasks}(\hat{\edges},\weights):=\big\{(i,j)\in\hat{\edges}\,\big|\,w_{i,j}=\Bop_{\subagents,\subtasks}(\hat{\edges},\weights)\big\}\,.
\end{align}
\end{defn}

Next, we quantify how sensitive an optimising assignment is to changes of the bottleneck weight. Specifically, we define the robustness margin 
that captures the difference between the optimal bottleneck cost and the cost of the next best assignment when a specific bottleneck edge is removed. In \cref{sec:CollisionAvoidance} we use this information to determine by how much the positions of agents can vary such that the optimality of the assignment is maintained and collisions do not occur.  
\begin{defn}[Robustness margin]\label{def:criticaledge}
  Given the assignment weights, $\weights$, we consider the complete bipartite graph formed by a subset of agents, $\subagents$, and a subset of tasks, $\subtasks$, i.e. subgraph $\subgraph:=(\subagents,\subtasks,\subedges)$, with edge set $\subedges=\subagents\times\subtasks$. For $|\subedges|>1$, we define the set of so-called 
  \emph{maximum-margin bottleneck edges} as,
  \begin{align*}
        \ebottmm_{\subagents,\subtasks}(\weights):= \arg\max_{(i,j)\in\allebott_{\subagents,\subtasks}(\subedges,\weights)}\Bop_{\subagents,\subtasks}(\subedges\setminus\{(i,j)\},\weights)\,,
  \end{align*} and the corresponding \emph{robustness margin},
  \begin{align*}
      \robOp_{\subagents,\subtasks}(\weights) &:= \max_{(i,j)\in\allebott_{\subagents,\subtasks}(\subedges,\weights)}\Bop_{\subagents,\subtasks}(\subedges\setminus\{(i,j)\},\weights) - w_{i,j}\,.
  \end{align*}
  For $|\subagents|=|\subtasks|=|\subedges|=1$, the maximum-margin bottleneck edge is set to be the singleton edge $\ebottmm_{\subagents,\subtasks}(\weights)=\subedges$ and the robustness margin is assumed to be infinity, $\robOp_{\subagents,\subtasks}(\weights)=\infty$.
%
\end{defn}

The following proposition links the robustness margin to uniqueness properties of bottleneck minimising assignments. While a strictly positive robustness margin does not imply that there is a unique bottleneck minimising assignment, it does ensure that all bottleneck minimising assignments have in common that they assign all agent-task pairs in the set of maximum-margin bottleneck edges. 
\begin{prop}[Proof in Appendix~\ref{app:ProofUniqueMMB}]\label{prop:UniqueMMB}
   Given the assignment weights, $\weights$, we consider the complete bipartite graph, $\subgraph=(\subagents,\subtasks,\subedges)$, formed by a subset of agents, $\subagents\in\agents$, and a subset of tasks, $\subtasks\in\tasks$, with edge set $\subedges=\subagents\times\subtasks$, where $|\subedges|>1$. If the robustness margin is strictly positive, $\robOp_{\subagents,\subtasks}(\weights)>0$, then all bottleneck minimising assignments for subgraph $\subgraph$ assign all maximum-margin bottleneck edges, i.e., $\pi_{i^*,j^*} = 1$,  $\forall (i^*,j^*)\in\ebottmm_{\subagents,\subtasks}(\weights)$, $\forall\Pi=\{\pi_{i,j}\in\{0,1\}|(i,j)\in\edges\}\in\BAop_{\subagents,\subtasks}(\subedges,\weights)$.
\end{prop}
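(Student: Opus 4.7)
The plan is to argue by contradiction. Suppose some bottleneck minimising assignment $\Pi\in\BAop_{\subagents,\subtasks}(\subedges,\weights)$ fails to assign some maximum-margin bottleneck edge $(i^*,j^*)\in\ebottmm_{\subagents,\subtasks}(\weights)$, that is, $\pi_{i^*,j^*}=0$. The aim is to show that $\Pi$ is then still admissible on the subgraph with $(i^*,j^*)$ deleted and that its bottleneck value there coincides with the original optimum, which will clash with $\robOp_{\subagents,\subtasks}(\weights)>0$.

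First, I would check that $\Pi\in\vass_{\subagents,\subtasks}(\subedges\setminus\{(i^*,j^*)\})$. The task-coverage constraint for $j^*$ was satisfied over $\subedges=\subagents\times\subtasks$ by some agent in $\subagents$; because $\pi_{i^*,j^*}=0$ by hypothesis, that agent lies in $\subagents\setminus\{i^*\}$, so $j^*$ remains covered after removing the single edge $(i^*,j^*)$. The single-task constraint for $i^*$ is preserved because dropping a zero term cannot increase the remaining sum, and all coverage constraints for the other agents and tasks are unaffected by the deletion.

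Next, I would compare the bottleneck values on the two edge sets. Since $\pi_{i^*,j^*}w_{i^*,j^*}=0$, removing this term from the maximum in~\eqref{eq:maxweight} leaves it unchanged, so $\bott(\Pi,\subedges\setminus\{(i^*,j^*)\},\weights)=\bott(\Pi,\subedges,\weights)=\Bop_{\subagents,\subtasks}(\subedges,\weights)$. Admissibility of $\Pi$ on the reduced graph and the definition of $\Bop$ as a minimum over admissible assignments then give
\[
\Bop_{\subagents,\subtasks}(\subedges\setminus\{(i^*,j^*)\},\weights)\leq \Bop_{\subagents,\subtasks}(\subedges,\weights).
\]
On the other hand, $(i^*,j^*)\in\ebottmm_{\subagents,\subtasks}(\weights)\subseteq\allebott_{\subagents,\subtasks}(\subedges,\weights)$ gives $w_{i^*,j^*}=\Bop_{\subagents,\subtasks}(\subedges,\weights)$, and the hypothesis $\robOp_{\subagents,\subtasks}(\weights)>0$ rearranges via \cref{def:criticaledge} to $\Bop_{\subagents,\subtasks}(\subedges\setminus\{(i^*,j^*)\},\weights)>w_{i^*,j^*}=\Bop_{\subagents,\subtasks}(\subedges,\weights)$. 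This strict inequality contradicts the displayed weak inequality, closing the argument.

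The only mild subtlety I anticipate is the admissibility verification when an edge is deleted, in particular making explicit that coverage of task $j^*$ survives precisely because $\pi_{i^*,j^*}=0$ forces the covering agent to be some $i\neq i^*$. Once this is dispatched, the remainder is a one-line chain of inequalities from the definitions of $\ebottmm$ and $\robOp$.
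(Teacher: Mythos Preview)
Your proposal is correct and follows essentially the same contradiction argument as the paper's proof: both show that an optimal $\Pi$ with $\pi_{i^*,j^*}=0$ remains admissible on $\subedges\setminus\{(i^*,j^*)\}$ with unchanged bottleneck value, yielding $\Bop_{\subagents,\subtasks}(\subedges\setminus\{(i^*,j^*)\},\weights)\leq\Bop_{\subagents,\subtasks}(\subedges,\weights)$, which contradicts $\robOp_{\subagents,\subtasks}(\weights)>0$. Your version is in fact slightly more streamlined, since you derive the contradiction directly from the strict inequality implied by the margin, whereas the paper first establishes equality via the reverse inequality $\Bop_{\subagents,\subtasks}(\subedges\setminus\{(i^*,j^*)\},\weights)\geq\Bop_{\subagents,\subtasks}(\subedges,\weights)$ before invoking the margin; you also make the admissibility check on the reduced edge set more explicit than the paper does.
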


\subsection{Sequential Bottleneck Assignment}\label{sec:seqbottass}
The \ac{bap} 
does not fully determine all agent-task pairings.  
Given the primary assignment criterion introduced in~\eqref{eq:BAP}, we define a sequence of criteria of decreasing hierarchy. 
\begin{defn}[Sequential bottleneck assignment]\label{def:lexass} We consider the agents, $\agents$, the tasks, $\tasks$, and the assignment weights, $\weights$. An assignment, $\Pi^*$, is \emph{sequential bottleneck optimising} if it is bottleneck minimising for the assignment graph, $\graph=(\agents,\tasks,\edges)$, and the sequence of subgraphs $\subgraph_2,\subgraph_3,\dots,\subgraph_n$, where $\subgraph_k=(\subagents_k,\subtasks_k,\subedges_k)$ is the complete bipartite graph of the subset of agents, $\subagents_k\subset\agents$, 
and the subset of tasks, $\subtasks_k\subset\tasks$, 
obtained by removing a maximum-margin bottleneck agent and task from $\subgraph_{k-1}$, i.e.,
$\Pi^* \in \LexAop_{\agents,\tasks}(\weights)$,     
\begin{align*}
  \LexAop_{\agents,\tasks}(\weights)\!:=\!\big\{\!\Pi\!\in\!\bin_{\agents,\tasks}\big|\forall k\!\in\!\{1,\dots,n\}\, \Pi\!\in\!\BAop_{\subagents_k,\subtasks_k}(\subedges_k,\weights)\!\big\}\!,
\end{align*} 
where $\subedges_k=\subagents_k\times\subtasks_k$, $\subagents_1=\agents$, $\subtasks_1=\tasks$, 
\begin{subequations}\label{eq:subgraph_k}
  \begin{align}
    \subagents_k&=\agents\setminus\{i^*_1,\dots,i^*_{k-1}\}\,,\\ \subtasks_k&=\tasks\setminus\{j^*_1,\dots,j^*_{k-1}\}\,,
  \end{align}
\end{subequations} 
with so-called \emph{$k$-th order bottleneck edge},
\begin{align}\label{eq:kthbott}
  (i^*_k,j^*_k)\in\ebottmm_{\subagents_k,\subtasks_k}(\weights)\,,
\end{align} and \emph{$k$-th order robustness margin},
\begin{align}\label{eq:kthmargin}
    \mu_k = \robOp_{\subagents_k,\subtasks_k}(\weights)\,.
\end{align} 
\end{defn}

Given a sequential bottleneck assignment, $\Pi^*\in\LexAop_{\agents,\tasks}(\weights)$, the set of all assigned agents is defined by the set of $k$-th order bottleneck agents of all orders, $\{i^*_1,\dots,i^*_n\}\subseteq\agents$. For the case where there are more agents than tasks, $m>n$, the set of unassigned agents is 
$\agents\setminus\{i^*_1,\dots,i^*_n\}$. The following two propositions show that the assignment of 
the $k$-th order bottleneck edge 
is invariant to an additive weight perturbation that is smaller than the $k$-th order robustness margin. We use this property to link the robustness margins to bounds on the deviation of the agent positions that guarantee collision avoidance in Section~\ref{sec:CollisionAvoidance}. 

\begin{prop}[Proof in Appendix~\ref{app:ProofPropAssEdgeSwitch}]\label{prop:AssEdgeSwitch} Given the agents, $\agents$, with $|\agents|=m>1$, the tasks, $\tasks$, with $m\geq |\tasks|=n\geq1$, and the assignment weights, $\weights=\{w_{i,j}\geq 0\,|\, (i,j)\in\agents\times\tasks\}$, we consider a sequential bottleneck optimising assignment, $\Pi^*\in\LexAop_{\subagents,\subtasks}(\weights)$, with $k$-th order bottleneck edges, $(i^*_k,j^*_k)$, 
and 
robustness margins, $\mu_k$, 
for $k\in\{1,\dots,n\}$, introduced in~\cref{def:lexass}. 
We have
  \begin{align*}
    w_{i^*_a,j^*_a} + \mu_a \leq \max\{w_{i^*_a,j^*_b},w_{i^*_b,j^*_a}\}\,,
  \end{align*} for all $a\in\{1,\dots,n-1\}$ and $b\in\{a+1,\dots,n\}$.
\end{prop}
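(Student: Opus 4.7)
The plan is to combine the identity $w_{i^*_a,j^*_a}+\mu_a=\Bop_{\subagents_a,\subtasks_a}(\subedges_a\setminus\{(i^*_a,j^*_a)\},\weights)$, which follows from \cref{def:criticaledge} because $(i^*_a,j^*_a)\in\ebottmm_{\subagents_a,\subtasks_a}(\weights)$ attains the argmax, with an explicit admissible assignment on the punctured edge set whose bottleneck weight upper-bounds the left-hand side.

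As a preparatory step I would record a short monotonicity observation: since $\Pi^*$ is bottleneck-minimising on every $\subgraph_k$, every edge of $\Pi^*$ whose endpoints lie inside $\subagents_k\cup\subtasks_k$ has weight at most $w_{i^*_k,j^*_k}$; in particular, every edge of $\Pi^*$ lying inside $\subgraph_a$ has weight bounded above by $w_{i^*_a,j^*_a}$. This gives a uniform ceiling for the weights of all retained edges in the swap construction below.

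The main construction is a cross-swap. Invoking \cref{prop:UniqueMMB} in the positive-margin regime to ensure that $\Pi^*$ itself contains both $(i^*_a,j^*_a)$ and $(i^*_b,j^*_b)$, I define $\tilde{\Pi}$ to be the modification of $\Pi^*$ obtained by deleting these two edges and inserting the cross edges $(i^*_a,j^*_b)$ and $(i^*_b,j^*_a)$. Because $b>a$ forces all four endpoints into $\subagents_a\cup\subtasks_a$ and neither inserted edge equals $(i^*_a,j^*_a)$, the result satisfies $\tilde{\Pi}\in\vass_{\subagents_a,\subtasks_a}(\subedges_a\setminus\{(i^*_a,j^*_a)\})$. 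Its bottleneck weight is then bounded by $\max\{w_{i^*_a,j^*_b},w_{i^*_b,j^*_a},w_{i^*_a,j^*_a}\}$, the third term coming from the preparatory monotonicity applied to the retained edges of $\Pi^*$.

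Assembling the inequalities yields $w_{i^*_a,j^*_a}+\mu_a\leq\max\{w_{i^*_a,j^*_b},w_{i^*_b,j^*_a},w_{i^*_a,j^*_a}\}$, and whenever $\mu_a>0$ the left-hand side strictly exceeds $w_{i^*_a,j^*_a}$, so the maximum on the right must be attained by one of the two swap weights and the claim follows. I expect the hardest part to be the bookkeeping in the boundary case and the tie-breaking among elements of $\ebottmm_{\subagents_a,\subtasks_a}(\weights)$: ensuring that a representative $\Pi^*\in\LexAop_{\agents,\tasks}(\weights)$ can be chosen that simultaneously contains $(i^*_a,j^*_a)$ and $(i^*_b,j^*_b)$ is where the swap construction needs the most care, and where \cref{prop:UniqueMMB} must be invoked together with the consistency conditions from \cref{def:lexass}.
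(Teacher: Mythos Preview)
Your proposal is correct and follows essentially the same route as the paper: the identical cross-swap $\tilde\Pi$ is constructed, shown to be admissible on $\subedges_a\setminus\{(i^*_a,j^*_a)\}$, and its bottleneck then upper-bounds $w_{i^*_a,j^*_a}+\mu_a$ via \cref{def:criticaledge}. The paper handles the extra $w_{i^*_a,j^*_a}$ term slightly more directly by first observing that $\max\{w_{i^*_a,j^*_b},w_{i^*_b,j^*_a}\}\ge\bott(\Pi^*,\subedges_a,\weights)$ (from optimality of $\Pi^*$ on $\subgraph_a$), which absorbs the retained-edge contribution without your case split on $\mu_a$ and without invoking \cref{prop:UniqueMMB}.
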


\begin{prop}[Proof in Appendix~\ref{app:ProofPropUnassAgentSwitch}]\label{prop:UnassAgentSwitch}
Given the agents, $\agents$, with $|\agents|=m>1$, the strictly less tasks, $\tasks$, with $m>|\tasks|=n\geq 1$, and the assignment weights, $\weights=\{w_{i,j}\geq 0\,|\, (i,j)\in\agents\times\tasks\}$,  we consider a sequential bottleneck optimising assignment, $\Pi^*\in\LexAop_{\subagents,\subtasks}(\weights)$, with $k$-th order bottleneck edges, $(i^*_k,j^*_k)$, 
and 
robustness margins, $\mu_k$, 
for $k\in\{1,\dots,n\}$, introduced in~\cref{def:lexass}.
We have
  \begin{align*}
    w_{i^*_a,j^*_a} + \mu_a\leq w_{i',j^*_a}\,,
  \end{align*} for all $a\in\{1,\dots,n\}$ and $i'\in\agents\setminus\{i^*_1,\dots,i^*_n\}$.
\end{prop}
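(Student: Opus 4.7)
The plan is to combine the identity $\Bop_{\subagents_a, \subtasks_a}(\subedges_a \setminus \{(i^*_a, j^*_a)\}, \weights) = w_{i^*_a, j^*_a} + \mu_a$---which follows from $(i^*_a, j^*_a)$ being a maximum-margin bottleneck edge of $\subgraph_a$ by \cref{def:criticaledge}---with an upper bound obtained by exhibiting an admissible assignment on the reduced subgraph whose bottleneck cost is $w_{i', j^*_a}$.

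For the upper bound I would define $\hat{\Pi}$ from $\Pi^*$ by reassigning task $j^*_a$ from $i^*_a$ to the unassigned agent $i'$ while retaining the edges $(i^*_k, j^*_k)$ for $k \in \{a+1, \ldots, n\}$. Because $i' \notin \{i^*_1, \ldots, i^*_n\}$, the agents $i', i^*_{a+1}, \ldots, i^*_n$ are pairwise distinct, every task in $\subtasks_a$ is assigned exactly once, and the edge $(i^*_a, j^*_a)$ is avoided, so $\hat{\Pi} \in \vass_{\subagents_a, \subtasks_a}(\subedges_a \setminus \{(i^*_a, j^*_a)\})$. Its bottleneck cost is $\bott(\hat{\Pi}, \subedges_a \setminus \{(i^*_a, j^*_a)\}, \weights) = \max\{w_{i', j^*_a}, w_{i^*_{a+1}, j^*_{a+1}}, \ldots, w_{i^*_n, j^*_n}\}$, and by definition of the bottleneck weight this quantity is at least $w_{i^*_a, j^*_a} + \mu_a$.

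The main obstacle is verifying that the maximum above is actually attained by the term $w_{i', j^*_a}$, so that the two inequalities combine into $w_{i', j^*_a} \geq w_{i^*_a, j^*_a} + \mu_a$. Bottleneck-minimality of $\Pi^*$ on each $\subgraph_k$ in the sequence yields $w_{i^*_k, j^*_k} \leq w_{i^*_a, j^*_a}$ for every $k \geq a$, and the remaining gap $w_{i^*_a, j^*_a} \leq w_{i', j^*_a}$ I would close by contradiction: were $w_{i', j^*_a} < w_{i^*_a, j^*_a}$, then $\hat{\Pi}$---extended to match $\Pi^*$ on the edges $(i^*_k, j^*_k)$ with $k < a$---would remain bottleneck-minimizing on every subgraph in the sequence, but with the weight $w_{i^*_a, j^*_a}$ on the edge $(i^*_a, j^*_a)$ replaced by a strictly smaller value, which is incompatible with the maximum-margin choice of $(i^*_a, j^*_a)$ underpinning the sequential bottleneck optimality of $\Pi^*$. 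Combining the resulting inequalities yields $w_{i^*_a, j^*_a} + \mu_a \leq w_{i', j^*_a}$, as claimed.
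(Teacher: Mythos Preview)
Your construction is exactly the paper's: switch the agent for task $j^*_a$ from $i^*_a$ to the unassigned agent $i'$, obtaining an assignment (the paper calls it $\Pi'$) in $\vass_{\subagents_a,\subtasks_a}(\subedges_a\setminus\{(i^*_a,j^*_a)\})$ whose bottleneck bounds $w_{i^*_a,j^*_a}+\mu_a=\Bop_{\subagents_a,\subtasks_a}(\subedges_a\setminus\{(i^*_a,j^*_a)\},\weights)$ from above. Where you write out this bottleneck as $\max\{w_{i',j^*_a},w_{i^*_{a+1},j^*_{a+1}},\ldots,w_{i^*_n,j^*_n}\}$ and then argue that it equals $w_{i',j^*_a}$, the paper simply asserts the equivalent fact $w_{i',j^*_a}\geq\bott(\Pi^*,\subedges_a,\weights)$ ``from Definition~\ref{def:lexass}.''

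The weak point is your contradiction argument for $w_{i',j^*_a}\geq w_{i^*_a,j^*_a}$. You claim that if $w_{i',j^*_a}<w_{i^*_a,j^*_a}$ then the extended $\hat\Pi$ is bottleneck-minimising on every $\subgraph_k$ while avoiding $(i^*_a,j^*_a)$, and call this ``incompatible with the maximum-margin choice of $(i^*_a,j^*_a)$.'' But the existence of another optimiser that avoids $(i^*_a,j^*_a)$ is not by itself a contradiction: Definition~\ref{def:lexass} only requires $(i^*_a,j^*_a)\in\ebottmm_{\subagents_a,\subtasks_a}(\weights)$, not that every bottleneck minimiser uses it, and Proposition~\ref{prop:UniqueMMB} supplies that conclusion only under the extra hypothesis $\mu_a>0$. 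A cleaner route sidesteps the separate claim entirely: you already have $w_{i^*_a,j^*_a}+\mu_a\leq\max\{w_{i',j^*_a},\max_{k>a}w_{i^*_k,j^*_k}\}$ together with $\max_{k>a}w_{i^*_k,j^*_k}\leq w_{i^*_a,j^*_a}$, so if the overall maximum were not $w_{i',j^*_a}$ you would get $\mu_a\leq 0$; hence whenever $\mu_a>0$ the maximum is forced to be $w_{i',j^*_a}$ and the inequality follows immediately. The residual case $\mu_a=0$ is delicate for both your argument and the paper's one-line assertion, but is irrelevant downstream where Assumption~\ref{ass:globalsafedist} enforces $\mu_k>s\geq 0$.
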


We note that the $k$-th order bottleneck edge defined in~\eqref{eq:kthbott}  may not be unique. 
However, if the robustness margin is strictly positive for all orders, it follows from Proposition~\ref{prop:UniqueMMB} that all possible $k$-th order bottleneck edges lead to a unique sequentially bottleneck optimising assignment.
\begin{cor}
  We consider a sequential bottleneck optimising assignment, $\Pi^*\in\LexAop_{\agents,\tasks}(\weights)$. If the $k$-th order robustness margin is strictly positive, $\robOp_{\subagents_k,\subtasks_k}(\weights)>0$, for all 
  $k\in\{1,\dots,n\}$, then the sequence of bottleneck weight values, $(w_{i^*_1,j^*_1},\dots,w_{i^*_n,j^*_n})$, is unique and lexicographically optimal, i.e., for all other assignments, $\Pi'\in\vass_{\agents,\tasks}(\edges)\setminus\{\Pi^*\}$, either $w_{i^*_1,j^*_1}<w_{i'_1,j'_1}$ or
  there exists an order, $l\in\{2,\dots,n\}$, such that $w_{i^*_k,j^*_k}=w_{i'_k,j'_k}$ for $k\in\{1,\dots,l-1\}$ and $w_{i^*_l,j^*_l}< w_{i'_l,j'_l}$, 
  where $(w_{i'_1,j'_1},\dots,w_{i'_n,j'_n})$ is a sequence of weights associated to assigned edges according to $\Pi'$ that are ordered with non-increasing values.
\end{cor}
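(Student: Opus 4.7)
The plan is to prove the corollary in two logical steps. First I will show that for any $\Pi\in\LexAop_{\agents,\tasks}(\weights)$ the sequence $(w_{i^*_1,j^*_1},\dots,w_{i^*_n,j^*_n})$ coincides with the non-increasing sort of the weights of the edges assigned by $\Pi$. Second I will argue by induction on the first index of disagreement that any admissible assignment distinct from $\Pi^*$ produces a lexicographically strictly larger sorted weight sequence; uniqueness of the weight sequence then follows by applying this strict ordering symmetrically to any other candidate element of $\LexAop_{\agents,\tasks}(\weights)$.

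For the first step I would apply Proposition~\ref{prop:UniqueMMB} recursively along $\subgraph_1,\dots,\subgraph_n$: since $\Pi^*\in\BAop_{\subagents_k,\subtasks_k}(\subedges_k,\weights)$ for every $k$ and each $\mu_k>0$, the proposition forces $\Pi^*$ to assign every maximum-margin bottleneck edge of $\subgraph_k$, in particular the designated $(i^*_k,j^*_k)$. Hence the $n$ edges assigned by $\Pi^*$ are precisely $(i^*_1,j^*_1),\dots,(i^*_n,j^*_n)$, with weights $w_{i^*_k,j^*_k}=\Bop_{\subagents_k,\subtasks_k}(\subedges_k,\weights)$ that are non-increasing in $k$ by construction of the $\subgraph_k$'s.

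For the second step I would fix $\Pi'\in\vass_{\agents,\tasks}(\edges)\setminus\{\Pi^*\}$ with weights sorted non-increasingly as $(w_{i'_1,j'_1},\dots,w_{i'_n,j'_n})$ and prove by induction on $l$ the intermediate claim that whenever $w_{i^*_k,j^*_k}=w_{i'_k,j'_k}$ for all $k\in\{1,\dots,l\}$, the assignment $\Pi'$ assigns $(i^*_k,j^*_k)$ for all $k\in\{1,\dots,l\}$. The inductive step uses the induction hypothesis at $l-1$ to conclude that the restriction of $\Pi'$ to $\subedges_l$ lies in $\vass_{\subagents_l,\subtasks_l}(\subedges_l)$ and has maximum assigned weight exactly $w_{i'_l,j'_l}=w_{i^*_l,j^*_l}=\Bop_{\subagents_l,\subtasks_l}(\subedges_l,\weights)$, so that $\Pi'\in\BAop_{\subagents_l,\subtasks_l}(\subedges_l,\weights)$, and Proposition~\ref{prop:UniqueMMB} together with $\mu_l>0$ then forces $\Pi'$ to assign $(i^*_l,j^*_l)$. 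Equality at every index $l\in\{1,\dots,n\}$ would thus force $\Pi'=\Pi^*$, contradicting $\Pi'\neq\Pi^*$, so there is a smallest $l$ with $w_{i^*_l,j^*_l}\neq w_{i'_l,j'_l}$; at this $l$ the same admissibility argument yields $w_{i'_l,j'_l}\geq\Bop_{\subagents_l,\subtasks_l}(\subedges_l,\weights)=w_{i^*_l,j^*_l}$, and combining with the strict inequality gives $w_{i^*_l,j^*_l}<w_{i'_l,j'_l}$, which is precisely the lexicographic statement of the corollary (the case $l=1$ corresponds to the first branch of the ``either/or'').

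The main obstacle will be the bookkeeping inside the inductive step, namely verifying that the induction hypothesis indeed pins down the first $l-1$ assigned edges of $\Pi'$ as $(i^*_1,j^*_1),\dots,(i^*_{l-1},j^*_{l-1})$, so that the restriction of $\Pi'$ to $\subedges_l$ is a well-defined admissible assignment on $(\subagents_l,\subtasks_l)$ whose largest weight is exactly $w_{i'_l,j'_l}$; once this is carefully in place, Proposition~\ref{prop:UniqueMMB} and the strict positivity of $\mu_l$ close the induction.
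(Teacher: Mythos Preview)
Your proposal is correct and follows the line of reasoning the paper intends. The paper does not spell out a proof of this corollary; it merely states it as a consequence of Proposition~\ref{prop:UniqueMMB} (see the sentence immediately preceding the corollary), and your argument is precisely the detailed version of that deduction: recursively apply Proposition~\ref{prop:UniqueMMB} along the chain $\subgraph_1,\dots,\subgraph_n$, using $\mu_k>0$ at each step to pin down the assigned edge $(i^*_k,j^*_k)$ and to force any competing assignment to either match or strictly exceed the bottleneck at the first index of disagreement.
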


We call a sequential bottleneck optimising assignment with strictly positive robustness margins for all orders a \emph{robust lexicographic assignment}. 
Such an 
assignment is a special case solution of a \ac{lex} with a 
unique optimiser. 
Finding the $k$-th order bottleneck and the corresponding robustness margin requires solving two \ac{bap}s that can each be computed with a complexity of $\bigO(|\subedges_k||\subtasks_k|)$, see~\cite{Khoo2019CDC}. From~\eqref{eq:subgraph_k} it follows that $|\subedges_k|=(m-k+1)(n-k+1)$ and $|\subtasks_k|=n-k$. Finding the $k$-th order bottleneck edges and robustness margins for all 
$k\in\{1,\dots,n\}$ can therefore be achieved with an overall complexity of $\bigO(mn^{3})$. 
\begin{rem}\label{rem:DistLex}
     The algorithm introduced in~\cite{Khoo2019CDC} solves the \ac{bap} without centralised decision making. Agents only have knowledge of weights associated to them, i.e., agent $i\in\agents$ has access to the subset of weights $\weights_i:=\{w_{i,j}\,|\,j\in\tasks\}$, and communicates local estimates of maximal and minimal edge weights to other agents. Because solving the sequential bottleneck assignment consists of solving $2n$ nested \ac{bap}s, it follows that a sequential bottleneck optimising assignment and the corresponding robustness margins can also be obtained with distributed computation. 
\end{rem}

\subsection{Assignment Example}\label{sec:exass}
\begin{figure}
    \begin{subfigure}{0.32\linewidth}
        \centering
         \tikzstyle{agent} = [draw,circle,fill=blue!20,minimum width=0.6cm]
\tikzstyle{task} = [draw,circle,fill=green!20,minimum width=0.6cm]
\tikzstyle{ass1} = [densely dotted,very thick]
\tikzstyle{ass2} = [densely dashed,thick]
\tikzstyle{bottleneck} = [red]
\tikzstyle{critical} = [orange]

\begin{tikzpicture}[scale=2.3]
	{\scriptsize
	\node[agent] (a1) {$\alpha_1$};
	\node[agent] (a2) at ($(a1) + (0,-0.5)$) {$\alpha_2$};
	\node[agent] (a3) at ($(a2) + (0,-0.5)$) {$\alpha_3$};
	\node[agent] (a4) at ($(a3) + (0,-0.5)$) {$\alpha_4$};
	
	\node[task] (t1) at ($(a1) + (0.75,-0.25)$){$\tau_1$};
	\node[task] (t2) at ($(a2) + (0.75,-0.25)$) {$\tau_2$};
	\node[task] (t3) at ($(a3) + (0.75,-0.25)$) {$\tau_3$};

	\draw[ass1] (a1) -- node [above] {4} (t1);
	\draw[] (a1) -- node [above,near start] {6} (t2);
	\draw[ass2] (a1) -- node [below,very near start] {2} (t3);
	
	\draw[] (a2) -- node [above,near end] {8} (t1);
	\draw[densely dashdotted, very thick, bottleneck] (a2) -- node [above,near start] {4} (t2);
	\draw[] (a2) -- node [below,near end] {9} (t3);
	
	\draw[critical] (a3) -- node [above,near end] {7} (t1);
	\draw[] (a3) -- node [below,near start] {9} (t2);
	\draw[] (a3) -- node [below,near end] {8} (t3);
	
	\draw[ass2] (a4) -- node [above,very near start] {2} (t1);
	\draw[] (a4) -- node [below,near start] {2} (t2);
	\draw[ass1] (a4) -- node [below] {1} (t3);
	}
\end{tikzpicture}
        \caption{Order $k=1$} 
        \label{fig:LexExample_k1}
    \end{subfigure}
    \hfill
    \begin{subfigure}{0.32\linewidth}
        \centering
         \tikzstyle{agent} = [draw,circle,fill=blue!20,minimum width=0.6cm]
\tikzstyle{task} = [draw,circle,fill=green!20,minimum width=0.6cm]
\tikzstyle{removedvertex} = [white,draw,circle,fill=white,minimum width=0.6cm]
\tikzstyle{ass1} = [densely dotted,very thick]
\tikzstyle{ass2} = [densely dashed,thick]
\tikzstyle{bottleneck} = [red]
\tikzstyle{critical} = [orange]

\begin{tikzpicture}[scale=2.3]
	{\scriptsize
	\node[agent] (a1) {$\alpha_1$};
	\node[removedvertex] (a2) at ($(a1) + (0,-0.5)$) {$\alpha_2$};
	\node[agent] (a3) at ($(a2) + (0,-0.5)$) {$\alpha_3$};
	\node[agent] (a4) at ($(a3) + (0,-0.5)$) {$\alpha_4$};
	
	\node[task] (t1) at ($(a1) + (0.75,-0.25)$){$\tau_1$};
	\node[removedvertex] (t2) at ($(a2) + (0.75,-0.25)$) {$\tau_2$};
	\node[task] (t3) at ($(a3) + (0.75,-0.25)$) {$\tau_3$};

	\draw[ass1,critical] (a1) -- node [above] {4} (t1);
	\draw[ass2] (a1) -- node [below,very near start] {2} (t3);
	
	
	\draw[] (a3) -- node [above,near end] {7} (t1);
	\draw[] (a3) -- node [below,near end] {8} (t3);
	
	\draw[ass2,bottleneck,very thick] (a4) -- node [above,very near start] {2} (t1);
	\draw[ass1] (a4) -- node [below] {1} (t3);
	}
\end{tikzpicture}
        \caption{Order $k=2$} 
        \label{fig:LexExample_k2}
    \end{subfigure}
    \hfill
    \begin{subfigure}{0.32\linewidth}
        \centering
         \tikzstyle{agent} = [draw,circle,fill=blue!20,minimum width=0.6cm]
\tikzstyle{task} = [draw,circle,fill=green!20,minimum width=0.6cm]
\tikzstyle{removedvertex} = [white,draw,circle,fill=white,minimum width=0.6cm]
\tikzstyle{ass1} = [densely dotted,very thick]
\tikzstyle{ass2} = [densely dashed,thick]
\tikzstyle{bottleneck} = [red]
\tikzstyle{critical} = [orange]

\begin{tikzpicture}[scale=2.3]
	{\scriptsize
	\node[agent] (a1) {$\alpha_1$};
	\node[removedvertex] (a2) at ($(a1) + (0,-0.5)$) {$\alpha_2$};
	\node[agent] (a3) at ($(a2) + (0,-0.5)$) {$\alpha_3$};
	\node[removedvertex] (a4) at ($(a3) + (0,-0.5)$) {$\alpha_4$};
	
	\node[removedvertex] (t1) at ($(a1) + (0.75,-0.25)$){$\tau_1$};
	\node[removedvertex] (t2) at ($(a2) + (0.75,-0.25)$) {$\tau_2$};
	\node[task] (t3) at ($(a3) + (0.75,-0.25)$) {$\tau_3$};

	\draw[ass2,bottleneck,very thick] (a1) -- node [below,very near start] {2} (t3);
	
	
	\draw[critical] (a3) -- node [below,near end] {8} (t3);
	
	}
\end{tikzpicture}
        \caption{Order $k=3$} 
        \label{fig:LexExample_k3}
    \end{subfigure}
    \caption{Subgraphs, $\subgraph_k=(\subagents_k,\subtasks_k,\subedges_k)$, with edge weights, $\weights$, for sequential bottleneck assignment of tasks, $\tasks=\{\tau_l\}_{l=1}^3$ [green nodes], to agents, $\agents=\{\alpha_l\}_{l=1}^4$ [blue nodes]. 
    For each order, $k\in\{1,2,3\}$, a maximum-margin bottleneck edge, $(i^*_k,j^*_k)\in\ebottmm_{\subagents_k,\subtasks_k}(\weights)$ [red lines], and a critical edge associated with the robustness margin, $(i^c_k,j^c_k)\in\allebott_{\subagents_k,\subtasks_k}(\subedges_k\setminus\{(i^*_k,j^*_k)\},\weights)$ [orange lines], are shown. A bottleneck minimising assignment, $\hat{\Pi}\in\BAop_{\agents,\tasks}(\edges,\weights)$ [dotted], and the sequential bottleneck optimising assignment, $\Pi'\in\LexAop_{\agents,\tasks}(\weights)\subset\BAop_{\agents,\tasks}(\edges,\weights)$ [dashed], are highlighted.}
    \label{fig:LexExample}
\end{figure}

We consider the sequential bottleneck assignment of $n=3$ tasks, $\tasks=\{\tau_l\}_{l=1}^3$, to $m=4$ agents, $\agents=\{\alpha_l\}_{l=1}^4$, with assignment weights, $\weights$, illustrated in~\cref{fig:LexExample}. 
The sequence of \ac{bap}s is initialised with the full assignment graph, $\subgraph_1 = \graph = (\agents,\tasks,\edges)$, with $\edges=\agents\times\tasks$, as shown in~\cref{fig:LexExample_k1}. The bottleneck weight is $\Bop_{\agents,\tasks}(\edges,\weights)=4$. There are two edges that have weight equal to the bottleneck, $\allebott_{\agents,\tasks}(\edges,\weights)=\{(\alpha_1,\tau_1),(\alpha_2,\tau_2)\}$, and one of them is the unique maximum-margin bottleneck edge, $(i^*_1,j^*_1)=(\alpha_2,\tau_2)=\ebottmm_{\agents,\tasks}(\weights)$. That is, $\Bop_{\agents,\tasks}(\edges\setminus\{(\alpha_1,\tau_1)\},\weights)=4$, 
$\Bop_{\agents,\tasks}(\edges\setminus\{(\alpha_2,\tau_2)\},\weights)=7$, and the 
first order robustness margin is therefore $\mu_1 = \robOp_{\agents,\tasks}(\weights)=3$. Because the 
robustness margin is greater than zero, all bottleneck minimising assignments allocate $\alpha_2$ to $\tau_2$. We note that there exist multiple valid assignments with maximum weight equal to the bottleneck. 
Assignment $\hat{\Pi}$, corresponding to the task-agent pairings $\{(\alpha_1,\tau_1),(\alpha_2,\tau_2),(\alpha_4,\tau_3)\}$, and assignment $\Pi'$, corresponding to 
$\{(\alpha_1,\tau_3),(\alpha_2,\tau_2),(\alpha_4,\tau_1)\}$, both result in a bottleneck weight of 4, i.e., $\hat{\Pi},\Pi'\in\BAop_{\agents,\tasks}(\edges,\weights)$. These bottleneck minimising assignments differ in the higher order criteria of the sequential bottleneck assignment however.

By removing the first order bottleneck agent and task, 
we obtain the second order subgraph, $\subgraph_2=(\subagents_2,\subtasks_2,\subedges_2)$, with $\subagents_2=\agents\setminus\{\alpha_2\}$, $\subtasks_2=\tasks\setminus\{\tau_2\}$, $\subedges_2=\subagents_2\times\subtasks_2$, shown in~\cref{fig:LexExample_k2}. The second order bottleneck weight is $\Bop_{\subagents_2,\subtasks_2}(\subedges_2,\weights)=2$. There are two edges with weight equal to the bottleneck, $\allebott_{\subagents_2,\subtasks_2}(\subedges_2,\weights)=\{(\alpha_1,\tau_3),(\alpha_4,\tau_1)\}$, and both are maximum-margin bottlenecks, $\ebottmm_{\subagents_2,\subtasks_2}(\weights)=\allebott_{\subagents_2,\subtasks_2}(\subedges_2,\weights)$, with $\Bop_{\subagents_2,\subtasks_2}(\subedges_2\setminus\{(\alpha_1,\tau_3)\},\weights)=\Bop_{\subagents_2,\subtasks_2}(\subedges_2\setminus\{(\alpha_4,\tau_1)\},\weights)=4$. From Proposition~\ref{prop:UniqueMMB} it follows that all sequential bottleneck optimal assignments involve the agent-task pairings corresponding to both edges in $\ebottmm_{\subagents_2,\subtasks_2}(\weights)$. From these two edges, we arbitrarily select the second order bottleneck edge, $(i^*_2,j^*_2)=(\alpha_4,\tau_1)$, with 
robustness margin, $\mu_2=\robOp_{\subagents_2,\subtasks_2}(\weights)=2$. In the final step we consider the subgraph, $\subgraph_3=(\subagents_3,\subtasks_3,\subedges_3)$, with $\subagents_3=\subagents_2\setminus\{\alpha_4\}$, $\subtasks_2=\subtasks_2\setminus\{\tau_1\}$, $\subedges_3=\subagents_3\times\subtasks_3$, shown in~\cref{fig:LexExample_k3}. The third order bottleneck weight is, $\Bop_{\subagents_3,\subtasks_3}(\subedges_3,\weights)=2$, with unique bottleneck edge, $(i^*_3,j^*_3)=\allebott_{\subagents_3,\subtasks_3}(\subedges_3,\weights)=\ebottmm_{\subagents_3,\subtasks_3}(\weights)=(\alpha_1,\tau_3)$, and robustness margin, $\mu_3=\robOp_{\subagents_3,\subtasks_3}(\weights)=6$. 

The resulting sequential bottleneck optimising assignment, $\Pi^*=\Pi'\in\LexAop_{\subagents_3,\subtasks_3}(\weights)$, is unique and a robust lexicographical assignment because the robustness margins are strictly positive for all orders, i.e., every other admissible assignment, $\Pi\in\vass_{\agents,\tasks}(\edges)\setminus\{\Pi^*\}$, has a lexicographically larger weight sequence than $(w_{i^*_1,j^*_1},w_{i^*_2,j^*_2},w_{i^*_3,j^*_3})=(4,2,2)$. Furthermore, we observe that $w_{i^*_1,j^*_1}+\mu_1<\max\{w_{i^*_1,j^*_2},w_{i^*_2,j^*_1}\} = 8$, $w_{i^*_1,j^*_1}+\mu_1<\max\{w_{i^*_1,j^*_3},w_{i^*_3,j^*_1}\} = 9$, and $w_{i^*_2,j^*_2}+\mu_2=\max\{w_{i^*_2,j^*_3},w_{i^*_3,j^*_2}\} = 4 $ in accordance with Proposition~\ref{prop:AssEdgeSwitch}. We also see that $w_{i^*_1,j^*_1}+\mu_1<w_{\alpha_3,j^*_1}=9$, $w_{i^*_2,j^*_2}+\mu_2<w_{\alpha_3,j^*_2}=7$, and
$w_{i^*_3,j^*_3}+\mu_3=w_{\alpha_3,j^*_3}=8$ in agreement with Proposition~\ref{prop:UnassAgentSwitch}.


\section{Collision Avoidance}\label{sec:CollisionAvoidance} In this section we address inter-agent collision avoidance. We first investigate sufficient conditions 
related to the sequential bottleneck assignment 
 in Section~\ref{sec:GeneralConditions}. Then, in Section~\ref{sec:LocalConstratints} we introduce time-dependent position constraints for the individual agents.
 
\subsection{Sufficient Conditions for Collision Avoidance}\label{sec:GeneralConditions}
\begin{assum} \label{ass:asslex} 
The assignment weights, $\weights=\{w_{i,j}=\dist(p_i(0),g_j)\,|\,(i,j)\in\agents\times\tasks\}$, are the distances between initial agent positions and destinations, defined in~\cref{ass:dist}. Agents, $\agents$, are allocated to destinations, $\tasks$, with a sequential bottleneck optimising 
assignment, $\Pi^*\in\LexAop_{\agents,\tasks}(\weights)$, as in Definition~\ref{def:lexass}, with $k$-th order bottleneck agent-task pair, $(i^*_k,j^*_k)$, and 
robustness margin, $\mu_k$, for 
$k\in\{1,\dots,n\}$.
\end{assum}

 Using the safety distances from Definition~\ref{defn:safetydistance}, we provide a first condition which guarantees that an assigned agent does not collide with any other agent at a particular time.
\begin{lem} \label{lem:generalsourcecond}
  Given \cref{ass:dist,,ass:asslex}. The $k$-th order bottleneck agent, $i^*_k$, $k\in\{1,\dots,n\}$, 
  does not collide with any other agent, $i'\in\agents\setminus\{i^*_k\}$, at time $t$ if  
  \begin{align}\label{eq:targetradcond}
    \dist(p_{i'}(0),p_{i'}(t)) +\dist(p_{i^*_k}(t),g_{j^*_k}) &< w_{i',j^*_k} - s_{i',i^*_k}\,.
  \end{align}
\end{lem}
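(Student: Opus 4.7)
The plan is to reduce the claimed inequality directly to the collision-avoidance condition of Definition 1, $\dist(p_{i^*_k}(t),p_{i'}(t))>s_{i',i^*_k}$, via two applications of the triangle inequality from Assumption 1. I expect no real obstacle here: the key observation is just that $w_{i',j^*_k}=\dist(p_{i'}(0),g_{j^*_k})$ by Assumption 2, and this initial-position-to-destination distance can be bounded above by chaining through the current positions $p_{i'}(t)$ and $p_{i^*_k}(t)$.

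Concretely, I would begin by invoking the triangle inequality with the intermediate point $p_{i'}(t)$ to obtain
\begin{align*}
  \dist(p_{i'}(0),g_{j^*_k}) \leq \dist(p_{i'}(0),p_{i'}(t)) + \dist(p_{i'}(t),g_{j^*_k}),
\end{align*}
and then apply the triangle inequality again to the last term on the right, this time with $p_{i^*_k}(t)$ as the intermediate point:
\begin{align*}
  \dist(p_{i'}(t),g_{j^*_k}) \leq \dist(p_{i'}(t),p_{i^*_k}(t)) + \dist(p_{i^*_k}(t),g_{j^*_k}).
\end{align*}
Chaining the two bounds and replacing $\dist(p_{i'}(0),g_{j^*_k})$ by $w_{i',j^*_k}$ yields
\begin{align*}
  w_{i',j^*_k} \leq \dist(p_{i'}(0),p_{i'}(t)) + \dist(p_{i^*_k}(t),p_{i'}(t)) + \dist(p_{i^*_k}(t),g_{j^*_k}).
\end{align*}

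Finally, I would rearrange this to isolate $\dist(p_{i^*_k}(t),p_{i'}(t))$ and combine with the hypothesis~\eqref{eq:targetradcond}. Specifically, the hypothesis gives $w_{i',j^*_k}-\dist(p_{i'}(0),p_{i'}(t))-\dist(p_{i^*_k}(t),g_{j^*_k})>s_{i',i^*_k}$, while the chained triangle inequality gives $\dist(p_{i^*_k}(t),p_{i'}(t))\geq w_{i',j^*_k}-\dist(p_{i'}(0),p_{i'}(t))-\dist(p_{i^*_k}(t),g_{j^*_k})$. Combining these two yields $\dist(p_{i^*_k}(t),p_{i'}(t))>s_{i',i^*_k}$, which is exactly the non-collision condition~\eqref{eq:CollisionAvoidanceConstraint} from Definition 1, completing the proof.
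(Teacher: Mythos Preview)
Your proposal is correct and follows essentially the same approach as the paper: both chain two applications of the triangle inequality to bound $w_{i',j^*_k}=\dist(p_{i'}(0),g_{j^*_k})$ by $\dist(p_{i'}(0),p_{i'}(t)) + \dist(p_{i'}(t),p_{i^*_k}(t)) + \dist(p_{i^*_k}(t),g_{j^*_k})$, then rearrange and combine with the hypothesis~\eqref{eq:targetradcond} to obtain the non-collision condition. The paper simply states the chained inequality in one line rather than writing out the two intermediate steps.
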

\begin{proof}
  Considering the triangle inequality in~\eqref{eq:triangleIneq}, the distance between the initial position of agent $i'$ and the target destination assigned to agent $i^*_k$ is bounded, $\dist(p_{i'}(0),g_{j^*_k}) \leq \dist(p_{i'}(0),p_{i'}(t)) + \dist(p_{i'}(t),p_{i^*_k}(t)) + \dist(p_{i^*_k}(t),g_{j^*_k})$. By applying \eqref{eq:targetradcond}, with $w_{i',j^*_k} = \dist(p_{i'}(0),g_{j^*_k})$, we obtain $\dist(p_{i'}(t),p_{i^*_k}(t)) > s_{i',i^*_k}$ as required from Definition~\ref{defn:safetydistance}. 
  %
\end{proof}


For guaranteed collision avoidance among assigned agents we combine the concept of robustness margin with the condition given in Lemma~\ref{lem:generalsourcecond}.
\begin{prop}\label{prop:CollisionAvoidanceGeneral}
   Given \cref{ass:dist,,ass:asslex}. The $k$-th order bottleneck agent, $i^*_k$, $k\in\{1,\dots,n-1\}$, 
   does not collide with a higher order bottleneck agent, $i^*_l$, $l\in\{k + 1,\dots n\}$, at time $t$ if both following conditions are satisfied,
   \begin{subequations}\label{eq:conditions}
   \begin{align}
      \dist(p_{i^*_l}(0),p_{i^*_l}(t)) + \dist(p_{i^*_k}(t),g_{j^*_k}) &< w_{i^*_k,j^*_k} + \mu_k - s_{i^*_l,i^*_k}\,,\label{eq:cond1} \\  
      \dist(p_{i^*_k}(0),p_{i^*_k}(t)) + \dist(p_{i^*_l}(t),g_{j^*_l}) &< w_{i^*_k,j^*_k}  + \mu_k - s_{i^*_l,i^*_k} \,.\label{eq:cond2}
  \end{align}
   \end{subequations}
\end{prop}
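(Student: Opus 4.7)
The plan is to combine the robustness margin property from Proposition~\ref{prop:AssEdgeSwitch} with the pairwise non-collision condition of Lemma~\ref{lem:generalsourcecond}. The two conditions~\eqref{eq:cond1} and~\eqref{eq:cond2} mirror the two terms inside the $\max$ appearing in Proposition~\ref{prop:AssEdgeSwitch}, which strongly suggests a case split on which term realises the maximum.

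First I would apply Proposition~\ref{prop:AssEdgeSwitch} with $a=k$ and $b=l$ (noting $k<l$) to obtain
\begin{align*}
w_{i^*_k,j^*_k}+\mu_k \leq \max\{w_{i^*_k,j^*_l},\,w_{i^*_l,j^*_k}\}.
\end{align*}
Then I would distinguish two cases. In the first case, assume the maximum equals $w_{i^*_l,j^*_k}$. Substituting this bound into the right-hand side of~\eqref{eq:cond1} yields
\begin{align*}
\dist(p_{i^*_l}(0),p_{i^*_l}(t))+\dist(p_{i^*_k}(t),g_{j^*_k}) < w_{i^*_l,j^*_k}-s_{i^*_l,i^*_k},
\end{align*}
which is exactly the hypothesis of Lemma~\ref{lem:generalsourcecond} applied to the $k$-th order bottleneck agent $i^*_k$ with $i'=i^*_l$. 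The lemma then gives the desired non-collision conclusion.

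In the second case, assume the maximum equals $w_{i^*_k,j^*_l}$. Here I would apply Lemma~\ref{lem:generalsourcecond} with the roles reversed, taking the $l$-th order bottleneck agent $i^*_l$ as the reference and $i'=i^*_k$. Substituting the bound into~\eqref{eq:cond2} and using the symmetry $s_{i^*_l,i^*_k}=s_{i^*_k,i^*_l}$ gives
\begin{align*}
\dist(p_{i^*_k}(0),p_{i^*_k}(t))+\dist(p_{i^*_l}(t),g_{j^*_l}) < w_{i^*_k,j^*_l}-s_{i^*_k,i^*_l},
\end{align*}
so Lemma~\ref{lem:generalsourcecond} (for order $l$) yields that $i^*_l$ does not collide with $i^*_k$, which is what we want.

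The whole argument is short once one sees why both~\eqref{eq:cond1} and~\eqref{eq:cond2} are required: a priori we do not know whether the robustness margin $\mu_k$ is justified by the ``column'' cost $w_{i^*_l,j^*_k}$ or the ``row'' cost $w_{i^*_k,j^*_l}$, and in the latter situation the triangle-inequality detour must pass through $g_{j^*_l}$ rather than $g_{j^*_k}$. The main subtlety is thus not a hard calculation but recognising this symmetry and invoking Lemma~\ref{lem:generalsourcecond} in both the ``forward'' direction (for the $k$-th order bottleneck) and the ``reverse'' direction (for the $l$-th order bottleneck), relying on symmetry of the safety distance so that either branch of the $\max$ lets one of the two conditions close the argument.
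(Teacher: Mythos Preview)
Your proof is correct and uses the same two ingredients as the paper's proof: Proposition~\ref{prop:AssEdgeSwitch} and Lemma~\ref{lem:generalsourcecond} applied once with $i^*_l$ as the ``other agent'' and once with $i^*_k$ as the ``other agent''. The only cosmetic difference is that you case-split directly on which term realises the $\max$ in Proposition~\ref{prop:AssEdgeSwitch}, whereas the paper case-splits on whether the hypothesis of Lemma~\ref{lem:generalsourcecond} holds for the first perspective and then derives a contradiction if it also fails for the second; the logical content is identical.
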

\begin{proof}
  For an arbitrary order, $k\in\{1,\dots,n-1\}$, let $l\in\{k + 1,\dots n\}$ be an arbitrary higher order. If $w_{i^*_l,j^*_k} > \dist(p_{i^*_l}(0),p_{i^*_l}(t)) + \dist(p_{i^*_k}(t),g_{j^*_k}) +  s_{i^*_l,i^*_k}$, agents $i^*_k$ and $i^*_l$ do not collide at time~$t$ as shown in Lemma~\ref{lem:generalsourcecond}, with agent~$i^*_l$ playing the role of the 'other agent'.

It remains to consider the case where 
$w_{i^*_l,j^*_k} \leq \dist(p_{i^*_l}(0),p_{i^*_l}(t)) + \dist(p_{i^*_k}(t),g_{j^*_k}) +  s_{i^*_l,i^*_k}$. Assume for the sake of contradiction that the condition in~\eqref{eq:targetradcond} also does not hold from perspective of agent~$i^*_l$, where agent~$i^*_k$ is the 'other agent', i.e., 
$w_{i^*_k,j^*_l} \leq \dist(p_{i^*_k}(0),p_{i^*_k}(t)) + \dist(p_{i^*_l}(t),g_{j^*_l}) + s_{i^*_k,i^*_l}$. Then, from~\eqref{eq:cond1} we have $w_{i^*_l,j^*_k} < w_{i^*_k,j^*_k}+\mu_k$ and from~\eqref{eq:cond2} we have $w_{i^*_k,j^*_l} < w_{i^*_k,j^*_k}+\mu_k$. 
This however contradicts Proposition~\ref{prop:AssEdgeSwitch}. 
It follows that $w_{i^*_k,j^*_l} > \dist(p_{i^*_k}(0),p_{i^*_k}(t)) + \dist(p_{i^*_l}(t),g_{j^*_l}) + s_{i^*_k,i^*_l}$. Thus, according to Lemma~\ref{lem:generalsourcecond}, agents $i^*_k$ and $i^*_l$ do not collide at time~$t$. 
\end{proof}

\subsection{Local Constraints for Guaranteed Collision Avoidance}\label{sec:LocalConstratints}
We now derive individual position constraints for every agent such that if all agents satisfy their associated constraints, collisions involving assigned agents are avoided. The constraints rely on the robustness of the sequential bottleneck assignment as formalised in the following assumption. 
\begin{assum}\label{ass:globalsafedist}
  There exists an upper bound on the safety distance, $s\geq s_{i,i'}$, between all agents, $i,i'\in\agents$, $i\neq i'$, that is smaller than the $k$-th order bottleneck robustness margin for all $k\in\{1,\dots,n\}$, i.e.,
  \begin{align}\label{eq:minmargin}
    s< \mu:=\min_{k\in\{1,\dots,n\}} \mu_k\,.
  \end{align}
\end{assum}

The agent position constraints are composed of up to two components. The first component is a bound on the distance of an agent from its initial position. This bound is limited by the same time-varying parameter, $a(t)$, for all agents up to a  saturation value, $A_k$, that is agent dependent. 
\begin{assum}\label{ass:boundfromsource}
  The distance of an agent $i$, to its initial position is bounded by $\ds_i(t)$, i.e., 
  \begin{align*}
      \dist(p_{i}(0),p_{i}(t))< \ds_i(t)\,,
  \end{align*}
 for all $i\in\agents$ and $t\in[0,T]$, with
\begin{align*}
  \ds_{i^*_k}(t) =
  \begin{cases}
    a(t) &\textrm{if }a(t) \leq A_k\,,\\
    A_k & \textrm{otherwise}\,,
  \end{cases}
\end{align*} for all assigned agents $i^*_k$, $k\in\{1,\dots,n\}$, and $\ds_{i'}(t) = 
  a_{i^*_n}(t)$, for all unassigned agents, $i'\in\agents\setminus\{i^*_1,\dots,i^*_n\}$, where
\begin{align}\label{eq:saturationvalue}
  A_k := \min_{l\in\{1,\dots,k\}}w_{i^*_l,j^*_l} + \mu_l - \frac{1}{2}(\mu + s)
\end{align}
 and $a(t)\geq \frac{1}{2}(\mu-s)$.
\end{assum}

The second component of the position constraints applies only to assigned agents. It consists of a bound on the distance of an agent to its target destination that decreases when the bound on the distance from the initial position increases. 
\begin{assum}\label{ass:boundfromtraget}
The distance of an assigned agent, $i^*_k$, to its destination is bounded by $\dt_{j^*_k}(t)$, i.e., 
\begin{align*}
    \dist(p_{i^*_k}(t),g_{j^*_k})< \dt_{j^*_k}(t)\,,
\end{align*} for all assigned agents $i^*_k$, $k\in\{1,\dots,n\}$, and $t\in[0,T]$, with
\begin{align*}
  \dt_{j^*_k}(t) = A_k - \ds_{i^*_k}(t) + \frac{1}{2}(\mu - s)\,.
\end{align*}
\end{assum}


Satisfaction of these local bounds provides a sufficient condition for collision avoidance as shown in the following.
\begin{thm}\label{thm:CollisionAvoidance}
  Given $n$ target destinations allocated to $m$ agents with a sequential bottleneck optimising assignment, as specified in Assumption~\ref{ass:asslex}, with robustness margins satisfying Assumption~\ref{ass:globalsafedist}, based on distance weights introduced in Assumption~\ref{ass:dist}. No assigned agent, $i^*_k$, $k\in\{1,\dots,n\}$, 
  collides with any other agent, $i'\in\agents$, at any time, $t\in[0,T]$, 
  if all agents satisfy the position bounds of Assumption~\ref{ass:boundfromsource} and all assigned agents additionally satisfy the position bounds of Assumption~\ref{ass:boundfromtraget}. 
\end{thm}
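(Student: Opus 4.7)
The plan is to fix an assigned agent $i^*_k$ and any other agent $i' \in \agents \setminus \{i^*_k\}$, and split into two cases according to whether $i'$ is assigned or unassigned. In each case we verify the sufficient condition for avoiding collision between $i^*_k$ and $i'$ (Lemma~\ref{lem:generalsourcecond} and Proposition~\ref{prop:CollisionAvoidanceGeneral}) using the position bounds of Assumptions~\ref{ass:boundfromsource} and~\ref{ass:boundfromtraget} together with the weight inequalities from Propositions~\ref{prop:AssEdgeSwitch} and~\ref{prop:UnassAgentSwitch}.

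First I would record two structural observations that do all of the real work. By~\eqref{eq:saturationvalue}, $A_k$ is non-increasing in $k$, so $\ds_{i^*_l}(t) \leq \ds_{i^*_k}(t)$ whenever $l \geq k$ and $\ds_{i'}(t) \leq \ds_{i^*_k}(t)$ for every unassigned $i'$. Moreover, combining Assumptions~\ref{ass:boundfromsource} and~\ref{ass:boundfromtraget} with~\eqref{eq:saturationvalue} gives the core inequality
\begin{align*}
\ds_{i^*_k}(t) + \dt_{j^*_k}(t) \;\leq\; A_k + \tfrac{1}{2}(\mu - s) \;\leq\; w_{i^*_k,j^*_k} + \mu_k - s,
\end{align*}
and, since $s \geq s_{i,i'}$ for all pairs by~\cref{ass:globalsafedist}, also $\leq w_{i^*_k,j^*_k}+\mu_k - s_{i^*_k,i'}$.

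For the unassigned case, Lemma~\ref{lem:generalsourcecond} requires $\dist(p_{i'}(0),p_{i'}(t)) + \dist(p_{i^*_k}(t),g_{j^*_k}) < w_{i',j^*_k} - s_{i',i^*_k}$. The left-hand side is bounded above by $\ds_{i'}(t) + \dt_{j^*_k}(t) \leq \ds_{i^*_k}(t) + \dt_{j^*_k}(t)$, to which the core inequality applies. Proposition~\ref{prop:UnassAgentSwitch} gives $w_{i^*_k,j^*_k}+\mu_k \leq w_{i',j^*_k}$, closing the gap. For the assigned case, without loss of generality take $k<l$ and apply Proposition~\ref{prop:CollisionAvoidanceGeneral}. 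Inequality~\eqref{eq:cond1} is immediate by monotonicity of $\ds_{i^*_\cdot}(t)$ and the core inequality. Inequality~\eqref{eq:cond2} needs slightly more care because the bounds on the two distances use different saturation values: expanding $\dt_{j^*_l}(t) = A_l - \ds_{i^*_l}(t) + \tfrac{1}{2}(\mu - s)$ gives
\begin{align*}
\ds_{i^*_k}(t) + \dt_{j^*_l}(t) = \bigl(\ds_{i^*_k}(t) - \ds_{i^*_l}(t)\bigr) + A_l + \tfrac{1}{2}(\mu - s),
\end{align*}
and a short case check on $a(t)$ against $A_l$ and $A_k$ shows $\ds_{i^*_k}(t) - \ds_{i^*_l}(t) \leq A_k - A_l$, so the expression is at most $A_k + \tfrac12(\mu-s) \leq w_{i^*_k,j^*_k} + \mu_k - s_{i^*_l,i^*_k}$.

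The main obstacle is the asymmetry of~\eqref{eq:cond2}: the position-to-destination bound at $i^*_l$ saturates at the smaller level $A_l$, while $i^*_k$ is allowed up to $A_k$. What saves the argument is the cancellation $A_l - \ds_{i^*_l}(t) + \ds_{i^*_k}(t) \leq A_k$ arising from the piecewise-min definition of $\ds_{i^*_\cdot}(t)$, and this is precisely why the constraints in Assumptions~\ref{ass:boundfromsource} and~\ref{ass:boundfromtraget} are engineered with matching offset $\tfrac12(\mu-s)$ and saturation values $A_k$.
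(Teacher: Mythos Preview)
Your proposal is correct and follows essentially the same approach as the paper: the paper also establishes monotonicity of $\ds_{i^*_k}(t)$ and $\dt_{j^*_k}(t)$ in $k$ together with the core inequality $\ds_{i^*_k}(t)+\dt_{j^*_k}(t)\leq w_{i^*_k,j^*_k}+\mu_k-s$, then invokes Proposition~\ref{prop:CollisionAvoidanceGeneral} for pairs of assigned agents and Lemma~\ref{lem:generalsourcecond} via Proposition~\ref{prop:UnassAgentSwitch} for assigned--unassigned pairs. Your explicit case check yielding $\ds_{i^*_k}(t)-\ds_{i^*_l}(t)\leq A_k-A_l$ unpacks exactly what the paper compresses into the phrase ``by construction,'' since that bound is equivalent to $\dt_{j^*_l}(t)\leq \dt_{j^*_k}(t)$.
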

\begin{proof}
  By construction, the bounds in \cref{ass:boundfromsource,,ass:boundfromtraget} satisfy $a_{i^*_k}(t)\geq a_{i^*_l}(t)$, $b_{j^*_k}(t)\geq b_{j^*_l}(t)$, and $a_{i^*_k}(t) + b_{j^*_k}(t) \leq w_{i^*_k,j^*_k} + \mu_k - s$ for all $k\in\{1,\dots,n\}$, $l\in\{k,\dots,n\}$, and $t\in[0,T]$. With Assumption~\ref{ass:globalsafedist} 
  the conditions in Proposition~\ref{prop:CollisionAvoidanceGeneral} 
  are satisfied for all $k\in\{1,\dots,n-1\}$ and $t\in[0,T]$. Thus, no assigned agents collide with each other. Because of Proposition~\ref{prop:UnassAgentSwitch},  Lemma~\ref{lem:generalsourcecond} is satisfied for all $k\in\{1,\dots,n\}$, $i'\in\agents\setminus\{i^*_1,\dots,i^*_n\}$, and $t\in[0,T]$. Thus, assigned agents also do not collide with unassigned agents.
\end{proof}

We note that there exist positions, $p_{i^*_k}(t)$, for every assigned agent, $i^*_k$, $k\in\{1,\dots,n\}$, at every time, $t\in[0,T]$, that satisfy \cref{ass:boundfromsource,,ass:boundfromtraget} if Assumption~\ref{ass:globalsafedist} holds. That is, Assumption~\ref{ass:boundfromsource} bounds the position, $p_{i^*_k}(t)$, to lie within a ball centred at the initial position, $p_{i^*_k}(0)$, with radius $a_{i^*_k}(t)\in[\frac{1}{2}(\mu-s),A_k]$  and Assumption~\ref{ass:boundfromtraget} bounds the position, $p_{i^*_k}(t)$, to lie within a ball centred at the target destination, $g_{j^*_k}$, with radius $b_{j^*_k}(t)\in[\frac{1}{2}(\mu-s),A_k]$. The two balls intersect for all $t\in[0,T]$ if 
$\mu>s$ because the sum of the radii is larger than the distance between the centres, 
\begin{align*}
    a_{i^*_k}(t) + b_{j^*_k}(t) 
    \geq w_{i^*_k,j^*_k} + \mu - s 
    > w_{i^*_k,j^*_k}=\dist(p_{i^*_k}(0),g_{j^*_k})\,.
\end{align*}   
It follows that the constrained sets of safe positions constructed from the bounds in \cref{ass:boundfromsource,,ass:boundfromtraget}, i.e.,
\begin{subequations}\label{eq:constraints}
  \begin{align}
    \locCon_{i^*_k}(t)=\big\{p\in\R^{n_p}\,\big|\,&\dist(p_{i^*_k}(0),p)<a_{i^*_k}(t)\,,\\
    &\dist(p,g_{j^*_k})<b_{j^*_k}(t)\big\}\,,\nonumber
\end{align} for all assigned agents $i^*_k$, $k\in\{1,\dots,n\}$, 
and 
\begin{align}
    \locCon_{i'}(t)=\big\{p\in\R^{n_p}\,\big|\,&\dist(p_{i'}(0),p)<a_{i'}(t)\big\}\,,
\end{align}
\end{subequations} 
for unassigned agents, $i'\in\agents\setminus\{i^*_1,\dots,i^*_n\}$, are all non-empty if Assumption~\ref{ass:globalsafedist} holds. 

For any agent, $i\in\agents$, the safe set, $\locCon_i(t)$, depends on the timing parameter, $a(t)$, the $k$-th order bottleneck weights and robustness margins obtained in the sequential bottleneck assignment, 
$w_{i^*_k,j^*_k}$ and $\mu_k$ respectively, for $k\in\{1,\dots,n\}$, the initial position, $p_{i}(0)$, and 
the location of its assigned destination, $g_{j}$, if $\pi^*_{i,j}=1$. We see that the larger the robustness margins are, the less the agent positions need to be constrained in~\eqref{eq:constraints}. 
These local position constraints 
provide a solution for the problem outlined in Section~\ref{sec:ProblemFormulation}.
\begin{cor}
The agent position constraints given in~\eqref{eq:constraints} can be obtained with a complexity $\bigO(mn^{3})$ via a sequential bottleneck assignment. 
The resulting local safe sets are a solution for Problem~\ref{prob:main} if Assumption~\ref{ass:globalsafedist} holds.
\end{cor}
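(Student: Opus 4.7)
The plan is to assemble the two claims of the corollary from results already established in the preceding sections. The correctness claim reduces to Theorem~\ref{thm:CollisionAvoidance}, while the complexity claim reduces to the $\bigO(mn^3)$ bound quoted for the sequential bottleneck assignment procedure together with a simple post-processing count for the saturation values.

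For the complexity bound I would first recall that computing the $k$-th order bottleneck edges and robustness margins for all $k\in\{1,\dots,n\}$ costs $\bigO(mn^3)$, as established in the discussion preceding Remark~\ref{rem:DistLex}. The remaining data needed to instantiate the safe sets in~\eqref{eq:constraints} are the initial positions $p_i(0)$, the destinations $g_j$, the timing parameter $a(t)$, and the saturation values $A_k$ defined in~\eqref{eq:saturationvalue}. All of these are available once the sequential bottleneck assignment is computed, and the $A_k$ can be obtained with a single pass through the $n$ orders by maintaining a running minimum of $w_{i^*_l,j^*_l}+\mu_l$, contributing only $\bigO(n)$ additional work. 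Hence the cost of constructing the full collection $\{\locCon_i(t)\}_{i\in\agents}$ is dominated by the sequential bottleneck assignment and remains $\bigO(mn^3)$.

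For the solution claim I would invoke Theorem~\ref{thm:CollisionAvoidance} directly. The sets in~\eqref{eq:constraints} encode exactly the bounds of \cref{ass:boundfromsource,ass:boundfromtraget}, so any trajectory with $p_i(t)\in\locCon_i(t)$ for every $i\in\agents$ and $t\in[0,T]$ satisfies those assumptions and therefore inherits the collision-avoidance guarantee of the theorem. Moreover, the first order of the sequential bottleneck optimising assignment is, by~\cref{def:lexass}, bottleneck minimising on $\graph=(\agents,\tasks,\edges)$, which is precisely the \ac{bap} in~\eqref{eq:BAP} appearing in Problem~\ref{prob:main}, so $\Pi^*$ is exactly the kind of assignment the problem asks for. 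Non-emptiness of each $\locCon_i(t)$ is already argued in the paragraph preceding the statement of the corollary: under \cref{ass:globalsafedist} the source and target balls of each assigned agent intersect because $a_{i^*_k}(t)+b_{j^*_k}(t)\geq w_{i^*_k,j^*_k}+\mu-s>w_{i^*_k,j^*_k}$, while for unassigned agents the single source ball has strictly positive radius $a(t)\geq\tfrac{1}{2}(\mu-s)>0$.

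I do not expect a real obstacle in this argument: it is essentially a book-keeping exercise that invokes Theorem~\ref{thm:CollisionAvoidance}, the previously derived complexity count, and the feasibility observation that immediately precedes the corollary. The only point worth making explicit is that a sequential bottleneck optimising assignment is, at its first order, a bottleneck minimising assignment on $\graph$, so the assignment underlying the construction of the $\locCon_i(t)$ is exactly the one required by Problem~\ref{prob:main}.
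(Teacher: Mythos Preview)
Your proposal is correct and matches the paper's intent: the corollary is stated without an explicit proof in the paper, being treated as an immediate consequence of Theorem~\ref{thm:CollisionAvoidance}, the $\bigO(mn^3)$ complexity count given before Remark~\ref{rem:DistLex}, and the non-emptiness argument in the paragraph preceding the corollary. Your write-up supplies exactly the book-keeping that the paper leaves implicit, and your observation that the first-order bottleneck of the sequential assignment coincides with the \ac{bap} of Problem~\ref{prob:main} is the right way to close the loop.
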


 An optimising assignment, $\Pi^*\in\LexAop_{\agents,\tasks}(\weights)$, and the 
 robustness margins are determined only based on information of the initial agent positions relative to the target destinations. Knowledge of the absolute positions of the other agents is not required to compute the individual constraints.
\begin{rem}\label{rem:DistConstraints}
The collision avoidance constraints given in~\eqref{eq:constraints} can be obtained with distributed computation. Agents 
need to coordinate through the shared scheduling variable, $a(t)$, and by exchanging estimates of all the $k$-order bottleneck distances and robustness margins, for all $k\in\{1,\dots,n\}$, during the assignment as suggested in Remark~\ref{rem:DistLex}.
\end{rem}

The local agent position constraints proposed in~\eqref{eq:constraints} can be incorporated in many different motion control or trajectory planning applications. For instance, because the position bounds provide collision avoidance guarantees without fully specifying position trajectories, they can be included in predictive optimisation approaches with objective functions that do not consider the coordination among the agents. The resulting optimisation problems can incorporate additional constraints such as avoidance of 
other objects. The sets in~\eqref{eq:constraints}
are convex as they are constructed from distance functions. This allows to bypass an extra procedure for convex approximation of a safe region which is typically required in model predictive motion control, see \cite{Bemporad2011CDC} for instance. For specific choices of the applied distance functions, e.g., the 1-norm or the infinity-norm distances, the constraints are linear in the position variables and can be efficiently encoded. 
The time-varying position bounds can also be used to verify that motion control strategies derived from simplified assumptions do not result in collisions in practice or in higher fidelity simulations. 
We investigate an example in the following Section. 

\section{Case Study}\label{sec:CaseStudy}

\begin{figure*}[t]
 \setlength{\axiswidth}{0.18\textwidth}
 \setlength{\axisheight}{0.32\textwidth}
 \centering
\begin{subfigure}{0.22\textwidth}
  \centering
%
%
\definecolor{mycolor1}{rgb}{0.66075,0.99063,0.65692}%
\definecolor{mycolor2}{rgb}{0.57260,0.48991,0.41615}%
\definecolor{mycolor3}{rgb}{0.70594,0.72129,0.41299}%
\definecolor{mycolor4}{rgb}{0.79032,0.57376,0.40566}%
\definecolor{mycolor5}{rgb}{0.12355,0.65430,0.83664}%
\definecolor{mycolor6}{rgb}{0.47967,0.44597,0.95705}%
\begin{tikzpicture}

\begin{axis}[%
width=\axiswidth,
height=\axisheight,
at={(0\axiswidth,0\axisheight)},
scale only axis,
xmin=0,
xmax=90,
xtick={ 0, 20, 40, 60, 80},
xlabel style={font=\color{white!15!black}},
xlabel={{\scriptsize $x$-coordinate [m]}},
ymin=0,
ymax=160,
ytick={ 20,  40,  60,  80, 100, 120, 140},
ylabel style={font=\color{white!15!black}},
ylabel={{\scriptsize $y$-coordinate [m]}},
axis background/.style={fill=white},
xlabel style={font=\scriptsize,at={(axis description cs:0.5,0.05)}},xticklabel style = {font=\scriptsize},ylabel style={font=\scriptsize,at={(axis description cs:0.18,0.5)}},yticklabel style={font=\scriptsize}
]
\addplot [color=green, draw=none, mark size=1.0pt, mark=*, mark options={solid, fill=green, green}, forget plot]
  table[row sep=crcr]{%
54.9381814754719	159.231300093603\\
83.1786137178556	111.400420258476\\
72.2721481740512	104.417351212404\\
66.9886168039705	81.5459252343556\\
9.74986732028404	140.278595188186\\
25.3581476829567	104.640010843739\\
};
\addplot [color=blue, draw=none, mark size=1.0pt, mark=*, mark options={solid, fill=blue, blue}, forget plot]
  table[row sep=crcr]{%
57.4429325168031	32.3985308545857\\
45.3422732646511	59.0858307240945\\
14.7439326022129	45.0432828104211\\
28.9341032861871	63.9871736496042\\
47.6164399484537	71.5878522122779\\
7.09136385210165	24.866030105122\\
52.4503263961034	24.2790330739011\\
16.8304855305558	1.47968860800485\\
};
\addplot [color=mycolor1, line width=1.0pt, forget plot]
  table[row sep=crcr]{%
72.2721481740512	104.417351212404\\
57.4429325168031	32.3985308545857\\
};
\node[below, align=center]
at (axis cs:72.272,104.417) {{\tiny $\tau_3$}};
\node[above, align=center]
at (axis cs:57.443,32.399) {{\tiny $\alpha_1$}};
\addplot [color=mycolor2, line width=1.0pt, forget plot]
  table[row sep=crcr]{%
83.1786137178556	111.400420258476\\
45.3422732646511	59.0858307240945\\
};
\node[below, align=center]
at (axis cs:83.179,111.4) {{\tiny $\tau_2$}};
\node[above, align=center]
at (axis cs:45.342,59.086) {{\tiny $\alpha_2$}};
\addplot [color=mycolor3, line width=1.0pt, forget plot]
  table[row sep=crcr]{%
25.3581476829567	104.640010843739\\
14.7439326022129	45.0432828104211\\
};
\node[below, align=center]
at (axis cs:25.358,104.64) {{\tiny $\tau_6$}};
\node[above, align=center]
at (axis cs:14.744,45.043) {{\tiny $\alpha_3$}};
\addplot [color=mycolor4, line width=1.0pt, forget plot]
  table[row sep=crcr]{%
9.74986732028404	140.278595188186\\
28.9341032861871	63.9871736496042\\
};
\node[below, align=center]
at (axis cs:9.75,140.279) {{\tiny $\tau_5$}};
\node[above, align=center]
at (axis cs:28.934,63.987) {{\tiny $\alpha_4$}};
\addplot [color=mycolor5, line width=1.0pt, forget plot]
  table[row sep=crcr]{%
54.9381814754719	159.231300093603\\
47.6164399484537	71.5878522122779\\
};
\node[below, align=center]
at (axis cs:54.938,159.231) {{\tiny $\tau_1$}};
\node[above, align=center]
at (axis cs:47.616,71.588) {{\tiny $\alpha_5$}};
\node[above, align=center]
at (axis cs:7.091,24.866) {{\tiny $\alpha_6$}};
\addplot [color=mycolor6, line width=1.0pt, forget plot]
  table[row sep=crcr]{%
66.9886168039705	81.5459252343556\\
52.4503263961034	24.2790330739011\\
};
\node[below, align=center]
at (axis cs:66.989,81.546) {{\tiny $\tau_4$}};
\node[above, align=center]
at (axis cs:52.45,24.279) {{\tiny $\alpha_7$}};
\node[above, align=center]
at (axis cs:16.83,1.48) {{\tiny $\alpha_8$}};
\end{axis}
\end{tikzpicture}%
  \caption{Assignment}
  \label{fig:setup}
\end{subfigure}
\begin{subfigure}{0.185\textwidth}
  \centering
  \input{images/trajectories_t1.tikz}
  \caption{Time $t=1$s}
  \label{fig:traj_t1}
\end{subfigure}
\begin{subfigure}{0.185\textwidth}
  \centering
  \input{images/trajectories_t3.tikz}
  \caption{Time $t=3$s}
  \label{fig:traj_t3}
\end{subfigure}
\begin{subfigure}{0.185\textwidth}
  \centering
  \input{images/trajectories_t5.tikz}
  \caption{Time $t=5$s}
  \label{fig:traj_t5}
\end{subfigure}
\begin{subfigure}{0.185\textwidth}
  \centering
  \input{images/trajectories_t7.tikz}
  \caption{Time $t=7$s}
  \label{fig:traj_t7}
\end{subfigure}
  \caption[]{Case study with agents $\agents=\{\alpha_l\}^8_{l=1}$ and tasks $\tasks=\{\tau_l\}^6_{l=1}$. 
  The sequential bottleneck optimising assignment is illustrated in (a). The positions satisfying the 
  constraints, $\locCon_i(t)$, are shown in~(b)-(e) as shaded areas for every agent, $i\in\agents$, 
  with initial positions, $p_i(0)$~%
  [blue dots]%
  ,  positions at the time~$t$, $p_i(t)$~%
  [
  black dots]%
  , surrounded by safety circle with diameter $s$~%
  [red circles]%
  , past trajectories, $p_i(t')$, $t'\in[0,t]$~%
  [dashed lines]%
  , and with 
  destinations, $g_j$~%
  [green dots]%
  , for all tasks $j\in\tasks$.}
  \label{fig:bounds}
\end{figure*}

We consider $m=8$ mobile robots, $\agents=\{\alpha_l\}^8_{l=1}$, operating on a plane. The location of each agent, $i\in\agents$, at time~$t$ is represented by a position point, $p_i(t)=(x_i(t),y_i(t))\in\R^{n_p}$ with $n_p=2$. We assume there are  $n=6$ tasks, $\tasks=\{\tau_l\}^6_{l=1}$, corresponding to the actions of visiting target destinations, $\{g_j\in\R^{n_p} \,|\,j\in\tasks\}$, that are each to be completed by one agent given the initial positions $\{p_i(0)\in\R^{n_p}\,|\,i\in\agents\}$.  \cref{fig:setup} illustrates the configuration at time $t=0$. The tasks are allocated to the agents according to a sequential bottleneck assignment, as proposed in Assumption~\ref{ass:asslex}, with the distance function in Assumption~\ref{ass:dist} specified to be the Euclidean distance,  $\dist(p,p') =\|p'-p\|_2$. 
The resulting optimising assignment, $\Pi^*\in\LexAop_{\agents,\tasks}(\weights)$, is described in Table~\ref{tab:lexassignment}, with $k$-order bottleneck edges, defined in~\eqref{eq:kthbott}, 
and robustness margins, defined in~\eqref{eq:kthmargin}, for all $k\in\{1,\dots,6\}$. We note that for every order the set of maximum-margin bottleneck edges is a singleton, all robustness margins are strictly positive, and that $\Pi^*$ is therefore a robust lexicographic assignment.

\definecolor{mycolor1}{rgb}{0.66075,0.99063,0.65692}%
\definecolor{mycolor2}{rgb}{0.57260,0.48991,0.41615}%
\definecolor{mycolor3}{rgb}{0.70594,0.72129,0.41299}%
\definecolor{mycolor4}{rgb}{0.79032,0.57376,0.40566}%
\definecolor{mycolor5}{rgb}{0.12355,0.65430,0.83664}%
\definecolor{mycolor6}{rgb}{0.69953,0.64656,0.68336}%
\definecolor{mycolor7}{rgb}{0.47967,0.44597,0.95705}%
\definecolor{mycolor8}{rgb}{0.41407,0.58609,0.87963}%

\begin{table}[ht]
  \caption{Sequential bottleneck optimising assignment of target destinations to agents with initial positions and colour coding shown in~\cref{fig:setup}.}\label{tab:lexassignment}
  \begin{center}
   \begin{tabular}{>{\raggedleft\arraybackslash}p{0.6cm}|>{\raggedleft\arraybackslash}p{1.2cm}|>{\raggedleft\arraybackslash}p{1.2cm}|>{\raggedleft\arraybackslash}p{1.2cm}|>{\raggedleft\arraybackslash}p{0.8cm}}
    Order & {Bottleneck edge} & Bottleneck weight & Robustness margin & 
    Bound limit\\
    $k$ & $(i^*_k,j^*_k)$ & $w_{i^*_k,j^*_k}$ & $\mu_k$ & 
    $A_k$\\
    \hline
    \rowcolor{mycolor5!50}
    1 & $(\alpha_5,\tau_1)$ & 87.95m & 10.78m & 
    92.72m\\
    \rowcolor{mycolor4!50}
    2 & $(\alpha_4,\tau_5)$ & 78.67m & 9.99m & 
    82.64m \\
    \rowcolor{mycolor1!50}
    3 & $(\alpha_1,\tau_3)$ & 73.53m & 9.02m & 
    76.54m \\
    \rowcolor{mycolor2!50}
    4 & $(\alpha_2,\tau_2)$ & 64.56m & 27.82m & 
    76.54m \\
     \rowcolor{mycolor3!50}
    5 & $(\alpha_3,\tau_6)$ & 60.53m & 21.30m & 
    75.83m \\
    \rowcolor{mycolor7!50}
    6 & $(\alpha_7,\tau_4)$ & 59.08m & 23.38m & 
    75.83m \\
  \end{tabular}
  \end{center}
\end{table}


The agents are of varying size but we assume that no agent occupies any space outside of a radius of 1.5m from its position point, i.e., at time $t$ the body of agent, $i\in\agents$, lies within a safety circle with diameter $s=3$m centred at $p_i(t)$. The smallest 
robustness margin of the 
assignment, defined in~\eqref{eq:minmargin}, is 
$\mu=9.02$m. 
Because $s<\mu$, Assumption~\ref{ass:globalsafedist} is satisfied and the safe sets, $\locCon_i(t)\subset\R^2$, in~\eqref{eq:constraints} are non-empty for all agents $i\in\agents$.
\cref{fig:traj_t1,fig:traj_t3,fig:traj_t5,fig:traj_t7} shows the time-varying areas that satisfy the position constraints for every agent 
given the time-dependent coordination parameter, 
\begin{align*}
    a(t)=\vref t + \frac{1}{2}(\mu - s)\,,
\end{align*} evolving with constant rate, $\vref=10$m/s. We observe that for any assigned agent, $i^*_k$, $k\in\{1,\dots,6\}$, there is always at least a distance of $3$m between any point in the safe set, $\locCon_{i^*_k}(t)$, and any point in the safe of any other agent, $\locCon_{i'}(t)$, $i'\in\agents\setminus\{i^*_k\}$. The only safe sets that 
intersect are those of the unassigned agents.
For the assigned agents, $\{\alpha_1,\alpha_2,\alpha_3,\alpha_4\,\alpha_5,\alpha_7\}$, the local constraints consist of the intersection of both bounds introduced in \cref{ass:boundfromsource,,ass:boundfromtraget}. The two bounds overlap by a different amount for every 
agent, $i^*_k$, $k\in\{1,\dots,6\}$, depending on the difference of the corresponding saturation value, $A_k$ defined in~\eqref{eq:saturationvalue}, and the associated weight, $w_{i^*_k,j^*_k}$. We observe from Table~\ref{tab:lexassignment} that the saturation values of agents $\alpha_2$ and $\alpha_7$, corresponding to bottleneck orders $k=4$ and $k=6$ respectively, are not given by their own associated robustness margins but rather by the value of a lower order robustness margin. That is, the sequence of saturation values, $(A_1,\dots,A_6)$, 
is set to be non-increasing in agreement with~\eqref{eq:saturationvalue}. The constraints for the unassigned agents, $\{\alpha_6,\alpha_8\}$, only consist of limits on the distances 
from their initial positions that are bound by $A_6=75.83$m. If the positions of all agents remain inside the individual safe sets at all considered times, it is guaranteed from Theorem~\ref{thm:CollisionAvoidance} that none of the six assigned agents 
collide with any agent, including the two unassigned agents.



 We consider a scenario where the motion of the robotic agents is governed by nonlinear unicycle models. The agents are guided from their initial positions to their assigned destinations with decentralised feedback controllers. The controllers are obtained from an optimal control approach in which the models are linearised around reference straight-line trajectories, with constant reference speed, $v(t)=\vref$, and where input constraints are neglected. \cref{fig:traj_t1,fig:traj_t3,fig:traj_t5,fig:traj_t7} show a sample simulation in which the controllers are applied to the nonlinear models with input constraints, steering rate disturbances, and where the initial heading angles are not aligned with the targets. The resulting agent position trajectories clearly deviate from the straight-line references. However, because the positions of all agents never leave the local safe sets, collisions involving the assigned agents are guaranteed not to occur. It is enough to tune or design the controllers for every agent individually such that the deviations from the straight-line paths do not violate the local constraints. 
 This demonstrates how the motion control of the agents can be decoupled while still guaranteeing collision avoidance. 
 We note that the same analysis and planning can be applied to robots moving in three dimensions, $n_p=3$, without additional complexity.\footnote{A supplementary video available at http://ieeexplore.ieee.org 
 contains animations of this case study and other scenarios in 2D and 3D.}

\section{Conclusions}\label{sec:conclusion}
We derived local time-varying position constraints for agents that are assigned to move to different target 
destinations. We specified conditions for which satisfaction of the constraints guarantees that collisions are avoided. 
The introduced method allocates targets to agents such that the largest distance between an agent and its destination is minimised. There may be agents that are not assigned to any task but all agents are constrained 
in order to maintain a minimal safety distance from any assigned agent. 
The parameters of the local constraints are obtained from a sequential bottleneck assignment procedure and depend only on the distances between agent initial positions and destinations. 
The local constraints define non-empty convex sets of safe positions for every agent if the optimising assignment is sufficiently robust to changes in the distances relative to the size of the agents. We defined a method to quantify the robustness 
and observed that the more robust the assignment is, the larger the regions of safe positions are.

The constraints can be constructed in polynomial time and with distributed computation where agents only access limited information on other agents. The procedure consists of solving multiple coupled \ac{bap}s and involves structure which may be exploited for faster computation in future work.
The proposed constraints 
provide sufficient but not necessary conditions for collision avoidance. 
In cases where not all conditions are satisfied, other strategies may exist to avoid collisions. 
Incorporating other strategies and investigating alternative local constraints derived from the properties of the assignment motivates further research on this topic. 

\appendices
\section{Proof of Proposition~\ref{prop:UniqueMMB}}\label{app:ProofUniqueMMB}
\begin{proof}
From~\eqref{eq:allbottedges} 
we know that $w_{i^*,j^*}=\Bop_{\subagents,\subtasks}(\subedges,\weights)$ for any $(i^*,j^*)\in\ebottmm_{\subagents,\subtasks}(\weights)$. Given $\robOp_{\subagents,\subtasks}(\weights)>0$, we assume for the sake of contradiction that there exist $\Pi\in\BAop_{\subagents,\subtasks}(\subedges,W)$ and $(i^*,j^*)\in\ebottmm_{\subagents,\subtasks}(\weights)$ with $\pi_{i^*,j^*}=0$. From
~\eqref{eq:bottleneckweight} it follows that $\bott(\Pi,\subedges,\weights)=\Bop_{\subagents,\subtasks}(\subedges,\weights)$. Because $\pi_{i^*,j^*}w_{i^*,j^*}=0$ in~\eqref{eq:maxweight}, we have $\bott(\Pi,\subedges,\weights)=\bott(\Pi,\subedges\setminus\{(i^*,j^*)\},\weights)$. From~\eqref{eq:bottleneckweight} it follows that $\bott(\Pi,\subedges\setminus\{(i^*,j^*)\},\weights)\geq\Bop_{\subagents,\subtasks}(\subedges\setminus\{(i^*,j^*)\},\weights)$ and thus $\Bop_{\subagents,\subtasks}(\subedges\setminus\{(i^*,j^*)\},\weights)\leq\Bop_{\subagents,\subtasks}(\subedges,\weights)$. 
We know that $\Bop_{\subagents,\subtasks}(\subedges\setminus\{(i^*,j^*)\},\weights)\geq\Bop_{\subagents,\subtasks}(\subedges,\weights)$ from \eqref{eq:bottleneckweight} because $\{\Pi\in\vass_{\subagents,\subtasks}(\subedges\setminus\{(i^*,j^*)\})|\pi_{i^*,j^*}=0\}\subset\vass_{\subagents,\subtasks}(\subedges)$. It follows 
that $\Bop_{\subagents,\subtasks}(\subedges\setminus\{(i^*,j^*)\},\weights)=\Bop_{\subagents,\subtasks}(\subedges,\weights)$ which contradicts $\robOp_{\subagents,\subtasks}(\weights)=\Bop_{\subagents,\subtasks}(\subedges\setminus\{(i^*,j^*)\},\weights)-\Bop_{\subagents,\subtasks}(\subedges,\weights)>0$.
\end{proof}

\section{Proof of Proposition~\ref{prop:AssEdgeSwitch}}\label{app:ProofPropAssEdgeSwitch}
\begin{proof}
For an arbitrary order, $a\in\{1,\dots,n-1\}$, and an arbitrary higher order, $b\in\{a+1,\dots,n\}$, we consider the 
assignment, $\hat{\Pi}=\{\hat{\pi}_{i,j}\in\{0,1\}\,|\,(i,j)\in\agents\times\tasks\}$, obtained from $\Pi^*=\{\pi^*_{i,j}\in\{0,1\}\,|\,(i,j)\in\agents\times\tasks\}$ by switching the agent-task 
pairings of $(i^*_a,j^*_a)$ and $(i^*_b,j^*_b)$ to $(i^*_a,j^*_b)$ and $(i^*_b,j^*_a)$, i.e.,
\begin{align*}
    \hat{\pi}_{i,j} =  
    \begin{cases}
        0 & \textrm{if }(i,j)=(i^*_a,j^*_a)\textrm{ or }(i,j)=(i^*_b,j^*_b)\,,\\
        1 & \textrm{if }(i,j)=(i^*_a,j^*_b)\textrm{ or }(i,j)=(i^*_b,j^*_a)\,,\\
        \pi^*_{i,j}& \textrm{otherwise}\,.
    \end{cases}
\end{align*}
We note that $(i^*_a,j^*_b),(i^*_b,j^*_a)\in
\subedges_a$ by definition in~\eqref{eq:subgraph_k}. Because $(i^*_a,j^*_b)$ and $(i^*_b,j^*_a)$ are not assigned according to $\Pi^*$, it follows from Definition~\ref{def:lexass} that $\max\{w_{i^*_a,j^*_b},w_{i^*_b,j^*_a}\}\geq \bott(\Pi^*,\subedges_a,\weights)
$. By construction of $\hat{\Pi}$, we know that $\bott(\hat{\Pi},\hat{\edges},\weights)=\bott(\Pi^*,\hat{\edges},\weights)\leq\bott(\Pi^*,\subedges_a,\weights)$, with $\hat{\edges}:=\subedges_a\setminus\{(i^*_a,j^*_a),(i^*_b,j^*_b),(i^*_a,j^*_b),(i^*_b,j^*_a)\}$. It follows that $\bott(\hat{\Pi},\subedges_a,\weights)
=\max\{w_{i^*_a,j^*_b},w_{i^*_b,j^*_a}\}$. 
Since $(i^*_a,j^*_a)$ is not assigned according to  $\hat{\Pi}$, we know 
$\bott(\hat{\Pi},\subedges_a\setminus\{(i^*_a,j^*_a)\},\weights)=\max\{w_{i^*_a,j^*_b},w_{i^*_b,j^*_a}\}$. Finally, from \cref{def:bottass,,def:criticaledge} we have that $w_{i^*_a,j^*_a}+\mu_a=\Bop_{\subagents_a,\subtasks_a}(\subedges_a\setminus\{(i^*_a,j^*_a)\},\weights)
\leq\bott(\hat{\Pi},\subedges_a\setminus\{(i^*_a,j^*_a)\},\weights)$.  
\end{proof}

\section{Proof of Proposition~\ref{prop:UnassAgentSwitch}}\label{app:ProofPropUnassAgentSwitch}
\begin{proof}
For an arbitrary order, $a\in\{1,\dots,n\}$, and an arbitrary unassigned agent, $i'\in\agents\setminus\{i^*_1,\dots,i^*_n\}$, consider the 
assignment, $\Pi'=\{\pi'_{i,j}\in\{0,1\}\,|\,(i,j)\in\agents\times\tasks\}$, obtained from $\Pi^*=\{\pi^*_{i,j}\in\{0,1\}\,|\,(i,j)\in\agents\times\tasks\}$ by switching the agent assigned to task $j^*_a$ from $i^*_a$ to $i'$, i.e.,
\begin{align*}
    \pi'_{i,j} =  
    \begin{cases}
        0 & \textrm{if }(i,j)=(i^*_a,j^*_a)\,,\\
        1 & \textrm{if }(i,j)=(i',j^*_b)\,,\\
        \pi^*_{i,j}& \textrm{otherwise}\,.
    \end{cases}
\end{align*}
We note that $(i',j^*_a)\in\subedges_a$ by definition in \eqref{eq:subgraph_k}. Because $(i',j^*_a)$ is not assigned according $\Pi^*$, it follows from Definition~\ref{def:lexass} that $w_{i',j^*_a}\geq\bott(\Pi^*,\subedges_a,\weights)$. By construction of $\Pi'$, we know that $\bott(\Pi',\edges',\weights)=\bott(\Pi^*,\edges',\weights)\leq\bott(\Pi^*,\subedges_a,\weights)$, with $\edges':=\subedges_a\setminus\{(i',j^*_a),(i^*_a,j^*_a)\}$. It follows that $\bott(\Pi',\subedges_a,\weights)=w_{i',j^*_a}$. 
Since $(i^*_a,j^*_a)$ is not assigned according to $\Pi'$, we know that $\bott(\Pi',\subedges_a\setminus\{(i^*_a,j^*_a)\},\weights)=w_{i',j^*_a}$. Finally, from \cref{def:bottass,,def:criticaledge} we have that $w_{i^*_a,j^*_a}+\mu_a=\Bop(\subedges_a\setminus\{(i^*_a,j^*_a)\},\weights)\leq\bott(\Pi',\subedges_a\setminus\{(i^*_a,j^*_a)\},\weights)$. 
\end{proof}

\bibliographystyle{IEEEtran}
\bibliography{IEEEabrv,CollisionAvoidancePaper}

\end{document}